\title{Renormalizing An Infinite Rational IET}
\author[]{W. Patrick Hooper}
\address{The City College of New York, New York, NY, USA 10031}
\address{CUNY Graduate Center, New York, NY, USA 10016}
\email{whooper@ccny.cuny.edu}
\author[]{Kasra Rafi}
\address{University of Toronto, Toronto, ON, Canada M5S 2E4}
\email{rafi@math.toronto.edu}
\author[]{Anja Randecker}
\address{University of Toronto, Toronto, ON, Canada M5S 2E4}
\email{anja@math.toronto.edu}
\newtheorem{theorem}{Theorem}
\newaliascnt{proposition}{theorem}  
\newtheorem{proposition}[proposition]{Proposition}
\newaliascnt{lemma}{theorem}  
\newtheorem{lemma}[lemma]{Lemma}
\newaliascnt{remark}{theorem}  
\newtheorem{remark}[remark]{Remark}
\newaliascnt{corollary}{theorem}  
\newtheorem{corollary}[corollary]{Corollary}
\theoremstyle{definition}
\newaliascnt{definition}{theorem}  
\newtheorem{question}[theorem]{Question}
\def\sA{{\mathcal{A}}}
\def\sC{{\mathcal{C}}}
\def\sN{{\mathcal{N}}}
\def\N{\mathbb{N}}
\def\Q{\mathbb{Q}}
\def\R{\mathbb{R}}
\def\Z{\mathbb{Z}}
\begin{document}
\clearpage
\begin{abstract}
We study an interval exchange transformation of $[0,1]$ formed by cutting the interval at the points $\frac{1}{n}$ and reversing the order of the intervals. We find that the transformation is periodic away from a Cantor set of Hausdorff dimension zero. On the Cantor set, the dynamics are nearly conjugate to the $2$--adic odometer. 
\end{abstract}
\maketitle
\thispagestyle{empty}

\section*{Introduction}

We study variations of the following interval exchange transformation: Consider the interval $[0,1)$ and cut it into subintervals of the form $[1-\frac{1}{k},1-\frac{1}{k+1})$ for integers $k \geq 1$. We are interested in the dynamical system $T_1 : [0,1) \to [0,1)$ that reverses the order of the intervals, see \autoref{fig:iet}.

To study this map $T_1$, we are also interested in similar maps $T_N$ on particular subintervals $X_N \subset [0,1)$. For this, let $N$ be a positive integer and let $X_N$ denote the half-open interval $[0,\frac{1}{N})$. Now consider the dynamical system $T_N : X_N \to X_N$ where $X_N$ is cut into half-open intervals of the form $[\frac{1}{N}-\frac{1}{k},\frac{1}{N}-\frac{1}{k+1})$ for $k\geq N$. Reversing the order of these intervals can be described by applying a translation by $\frac{1}{k} + \frac{1}{k+1}-\frac{1}{N}$ to each such interval. 
More formally, the map $T_N : X_N \to X_N$ is defined by
$$T_N(x)=x - \frac{1}{N} + \frac{1}{k} + \frac{1}{k+1} \quad \text{where} \quad k=\left \lfloor \frac{1}{\frac{1}{N}-x}\right \rfloor.$$
Here $\lfloor \star \rfloor$ denotes the greatest integer less than or equal to $\star$.
The map $T_N$ is nearly a bijection: it is one-to-one and its image is the open interval $(0,\frac{1}{N})$. 

\begin{figure}[b]
\includegraphics[width=\textwidth]{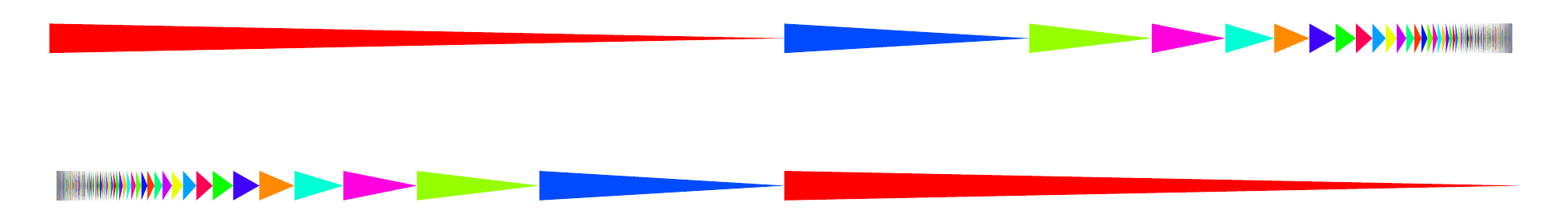}
\caption{{\em Top:} The interval $[0,1)$ cut into intervals of the form $[1-\frac{1}{k},1-\frac{1}{k+1})$. {\em Bottom:} The images of these intervals under $T_1$.}
\label{fig:iet}
\end{figure}

Following notation that is standard in the theory of dynamical systems, we use $T_N^j(x)$ to indicate the point that is obtained by applying this map $j$ times to the point $x \in X_N$. A point $x$ is called {\em periodic} under $T_N$ if there exists an integer $j >0$ such that $T_N^j(x)=x$. We will show:

\begin{theorem}
\label{thm: zero dimensional}
For each positive integer $N$, there is a Cantor set $\bar \Lambda_N \subset [0,\frac{1}{N}]$ of Hausdorff dimension zero such that $x$ is periodic under $T_N$ if and only if there exists an $\epsilon>0$ such that $(x,x+\epsilon) \cap \bar \Lambda_N = \emptyset$. In particular, $x$ is periodic if $x \not \in \bar \Lambda_N$, so the vast majority of points are periodic under the map $T_N$.
\end{theorem}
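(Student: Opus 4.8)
The plan is to renormalize $T_N$ by passing to the first-return map on the smaller interval $X_{N+1}$ and then to iterate, tracking the set of non-periodic points through the renormalization. First I would record the basic computation that for every $k\ge N$ the map $T_N$ sends
$$I_k:=\Big[\tfrac1N-\tfrac1k,\ \tfrac1N-\tfrac1{k+1}\Big)\quad\text{by an orientation-preserving isometry onto}\quad J_k:=\Big[\tfrac1{k+1},\ \tfrac1k\Big),$$
so $T_N$ is exactly the order-reversing rearrangement of the blocks $\{J_k\}_{k\ge N}$ (as $k$ grows, $I_k$ moves right while $J_k$ moves left). Next I would compute the first-return map $R$ of $T_N$ to $X_{N+1}=[0,\tfrac1{N+1})$. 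Every orbit meets $X_{N+1}$ within one step, since the only block leaving $X_{N+1}$ is $I_N$, and $T_N(I_N)=J_N$ returns on the next step. The interval $X_{N+1}$ is tiled by $I_N,\dots,I_{N^2+N-1}$, and a direct check shows that $R$ splits as two independent maps: on $[0,\tfrac1{N(N+1)})$ it is a translated two-step return equal to $T_{N(N+1)}$, while on $[\tfrac1{N(N+1)},\tfrac1{N+1})$ it is the immediate return, a finite interval exchange on $I_{N+1},\dots,I_{N^2+N-1}$ with all endpoints and translation lengths rational, hence with every interior point periodic.

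Writing $\Omega_M$ for the set of non-periodic points of $T_M$, the renormalization gives the recursive decomposition
$$\Omega_M=\Omega_{M(M+1)}\ \sqcup\ \big(T_M^{-1}(\Omega_{M(M+1)})\cap J_M\big),$$
because periodicity is preserved under the first return, the finite piece is entirely periodic, and $T_M$ is injective. Both pieces are copies of $\Omega_{M(M+1)}\subset[0,\tfrac1{M(M+1)})$: the first is the identity copy inside $X_{M+1}$, the second is the piecewise-isometric, order-reversing copy carried by the maps $J_k\to I_k$ into $J_M=[\tfrac1{M+1},\tfrac1M)$. I would then define $\bar\Lambda_N:=\overline{\Omega_N}$ and set $N_0=N$, $N_{j+1}=N_j(N_j+1)$, so the renormalization intervals $X_{N_j+1}$ follow a Sylvester-type sequence and $\tfrac1{N_j}$ decays doubly exponentially.

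The main quantitative step is Hausdorff dimension zero, which I would obtain from a covering estimate proved by induction on $i$: for every $M$ and every $i\ge0$, the set $\Omega_M$ can be covered by $2^i$ intervals each of length at most $\tfrac1{M_i}$, where $M_0=M$ and $M_{r+1}=M_r(M_r+1)$. The base case is $\Omega_M\subset[0,\tfrac1M)$. For the inductive step I cover $\Omega_{M(M+1)}$ by $2^i$ intervals $U_1,\dots,U_{2^i}$ of length $\le\tfrac1{M_{i+1}}$; these already cover the first piece, and for the second piece I use the key geometric fact that, because the rearrangement reverses the spatial order of equal-length blocks, the image of each $U_\ell$ under the piecewise translation has convex hull of length strictly less than $|U_\ell|$. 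Thus the second piece is also covered by $2^i$ intervals of length $\le\tfrac1{M_{i+1}}$, giving $2^{i+1}$ in all. Consequently, for any $s>0$,
$$\mathcal H^s_{1/N_j}(\Omega_N)\ \le\ 2^{j}\Big(\tfrac1{N_j}\Big)^{s}\ \longrightarrow\ 0\qquad(j\to\infty),$$
since $N_j>N_{j-1}^2$ forces $N_j$ to grow doubly exponentially. Hence $\mathcal H^s(\bar\Lambda_N)=0$ for every $s>0$, so $\dim_H\bar\Lambda_N=0$; in particular $\bar\Lambda_N$ contains no interval and is totally disconnected.

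Finally I would verify that $\bar\Lambda_N$ is a Cantor set and prove the periodicity characterization. Total disconnectedness follows from $\dim_H=0$, and compactness from taking a closure inside $[0,\tfrac1N]$; the remaining point is that $\bar\Lambda_N$ is perfect, which I would deduce from the self-similar decomposition above: near any point of $\Omega_N$ the order-reversing copies accumulate at the right endpoints $\tfrac1{M}$ of the successive renormalization levels, so no point of the closure is isolated, and a separate argument shows $\Omega_N\neq\emptyset$ by exhibiting a point with an aperiodic renormalization itinerary (the binary splitting $\Omega_M\cong 2\times\Omega_{M(M+1)}$ is what underlies the $2$-adic structure). For the equivalence, the inclusion of non-periodic points in $\bar\Lambda_N$ is immediate, and the content is the one-sided statement that a point is periodic exactly when some right-neighborhood avoids $\bar\Lambda_N$. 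Here I would use that the blocks $I_k$ are left-closed and $T_N$ is right-continuous at each breakpoint, so that periodic points organize into maximal right-open islands of constant period whose right boundaries lie in $\bar\Lambda_N$; the decomposition of $\Omega_N$ lets me identify these island boundaries with the accumulation set from the right. I expect this last point -- proving perfectness together with the precise one-sided boundary behaviour of the periodic islands -- to be the main obstacle, since it requires controlling how the periodic structure of the finite pieces fits together across all renormalization levels; the dimension estimate, by contrast, reduces to the clean convex-hull-shrinking property of the order-reversing rearrangement.
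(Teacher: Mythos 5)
Your renormalization scheme is the same one the paper uses (the return-map structure of \autoref{lem:return map} and \autoref{cor:periodic interval}, together with the binary splitting $\Omega_M\cong 2\times\Omega_{M(M+1)}$ that underlies the $2$--adic coding), but the quantitative step of your argument rests on a claim that is false. You assert that, because the rearrangement reverses the spatial order of equal-length blocks, the image of an arbitrary interval $U_\ell$ under the piecewise translation has convex hull of length at most (indeed strictly less than) $|U_\ell|$. This fails badly for intervals that straddle a block boundary. Concretely, take $M=1$: the second piece of your decomposition is carried by the inverse map sending $J_k=[\frac{1}{k+1},\frac{1}{k})$ onto $I_k=[1-\frac{1}{k},1-\frac{1}{k+1})$ by translation. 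For small $\delta>0$ the interval $U=(\frac13-\delta,\frac13+\delta)$ meets $J_3$ and $J_2$; its two pieces are translated to $(\frac34-\delta,\frac34)$ and $[\frac12,\frac12+\delta)$ respectively, so the image has convex hull $[\frac12,\frac34)$ of length $\frac14$ no matter how small $\delta$ is. Order reversal tears an interval crossing a discontinuity into pieces that land far apart; it does not shrink convex hulls (even for an interval inside a single block the length is preserved, not decreased). Consequently your induction does not close: the $2^i$ covering intervals of $\Omega_{M(M+1)}$ supplied by the inductive hypothesis carry no structural information, so nothing prevents them from crossing the discontinuities of $T_M$ on $J_M$, and their preimages then need not be covered by $2^i$ intervals of the same length. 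This matters in practice, since the discontinuities accumulate at $0$ and $\Omega_{M(M+1)}$ has points accumulating at $0$.

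What is missing is exactly the adaptedness statement that the paper proves. The paper constructs the covering intervals explicitly (the intervals $I_w$, $w\in\sA^k$, of \autoref{sect:cantor}, which for $s_k=\frac{1}{1+N_k}$ have length exactly $\frac{1}{N_k}$) and shows in \autoref{lem:key} that $T_N$ carries each $I_w^\star$ onto $I_{f(w)}^\star$ by a single translation; in other words, the covering intervals at every level have their endpoints at block boundaries and are never torn apart, which is what makes the count $2^k$ intervals of length $\frac{1}{N_k}$ legitimate. Your induction can be repaired by strengthening the hypothesis to say that each covering interval is a union of complete blocks of the relevant return map, but proving that is tantamount to \autoref{lem:key}, and it is the heart of the matter rather than a routine verification. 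Separately, the last part of your proposal (perfectness of $\bar\Lambda_N$, nonemptiness of $\Omega_N$, and the one-sided characterization of periodicity) is a description of what would need to be shown rather than a proof; the paper settles these points by identifying $\Lambda_N=h(\sA^\N\smallsetminus\sN)$ exactly, via \autoref{prop:N}, \autoref{thm:periodic 2} and \autoref{thm:aperiodic 2}, from which both directions of the equivalence follow.
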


Let $\Lambda_N$ denote the set of points which are {\em aperiodic} (not periodic) under $T_N$.
The dynamics of the restriction of $T_N$ to $\Lambda_N$ turn out to be related to the $2$--adic odometer which we now define.

Let $\sA$ be the alphabet $\{0,1\}$ and $\N=\{0,1,2,\ldots\}$. 
The $2$--adic integers are the set of formal sums
\begin{equation}
\label{eq:2-adics}
\sum_{k \in \N} \alpha_k 2^k \quad \text{with $\alpha_k \in \sA$ for all $k$.}
\end{equation}
We identify the $2$--adic integers with the space $\sA^\N$ consisting of all sequences $\alpha=(\alpha_0, \alpha_1, \ldots)$ with each $\alpha_k \in \sA$.
The $2$--adic integers form an abelian group with the operation of addition allowing carrying of the form $1\cdot 2^k + 1\cdot 2^k=1\cdot 2^{k+1}$. The \emph{addition-by-one map} is given by adding $1 \cdot 2^0$ to a $2$--adic integer. In terms of sequences, the addition-by-one map is the map $f:\sA^\N \to \sA^\N$ defined by
\begin{equation}
\label{eq:f}
f(\alpha)_k = \begin{cases}
0 & \text{if $k<j$} \\
1 & \text{if $k=j$} \\
\alpha_k & \text{if $k>j$}
\end{cases}
\quad \text{where} \qquad 
j=\min~(\{k:~\alpha_k=0\} \cup \{+\infty\}).
\end{equation}
This map is also called the {\em $2$--adic odometer.} It is a homeomorphism when we equip $\sA$ with the discrete topology and $\sA^\N$ with the product topology. It is well known that $f$ is {\em minimal} (all orbits are dense) and {\em uniquely ergodic} (there is only one invariant Borel probability measure) \cite[~\S 1.6.2]{Fogg}.

Let $\sN$ be the set of all $2$--adic integers $\alpha \in \sA^\N$ which end in an infinite sequence of ones, i.e.,
$$\sN=\{\alpha \in \sA^\N: \quad \text{there exists a $K$ such that $\alpha_k=1$ for $k>K$}\}.$$
Another characterization of this set is as the set of $2$--adic integers $\alpha$ such that there exists an $n>0$ for which $f^n(\alpha)=\overline{0}$, where $\overline{0} \in \sA^\N$ is the zero element defined by $\overline{0}_k=0$ for all $k$.

We show that the restriction of $T_N$ to the aperiodic set $\Lambda_N$ mirrors the action of the $2$--adic odometer:
\begin{theorem}
\label{thm:aperiodic}
For each positive integer $N$ and $T=T_N$, there is a continuous bijection $h=h_N$ from $\sA^\N \smallsetminus \sN$ to the aperiodic set $\Lambda_N \subset X_N$ such that
$T \circ h(\alpha)=h \circ f(\alpha)$ for all $\alpha \in \sA^\N$.
\end{theorem}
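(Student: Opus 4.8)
The plan is to exhibit a self-similar (renormalization) structure for $T_N$ that organizes the aperiodic orbits into a nested family of Rokhlin towers whose heights are powers of two, and then to read the odometer coding off the tower floors. The computational heart is a renormalization identity. Writing $I_N=[0,\tfrac1N-\tfrac1{N+1})=[0,\tfrac1{N(N+1)})=X_{N(N+1)}$ for the leftmost exchanged interval, a direct computation shows that $T_N$ carries $I_N$ onto $[\tfrac1{N+1},\tfrac1N)$ by the translation $x\mapsto x+\tfrac1{N+1}$, and carries $[\tfrac1{N+1},\tfrac1N)$ back onto $(0,\tfrac1{N(N+1)})$. Composing the two formulas, for $x\in I_N$ one gets
\[
T_N^2(x)=x-\tfrac{1}{N(N+1)}+\tfrac1k+\tfrac1{k+1},\qquad k=\floor{\tfrac{1}{\frac{1}{N(N+1)}-x}},
\]
which is exactly the definition of $T_{N(N+1)}$ on $X_{N(N+1)}$. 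Thus the first-return map of $T_N$ to $I_N$ is $T_{N(N+1)}$, with return time identically $2$; the appearance of the factor $2$ is the source of the $2$--adic structure.

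I would then set $N_0=N$ and $N_{n+1}=N_n(N_n+1)$ and iterate: the first-return map of $T_N$ to $X_{N_n}=[0,\tfrac1{N_n})$ is $T_{N_n}$, with constant return time $2^n$. This follows by induction, applying the identity to $M=N_{n-1}$ and checking that the intermediate visit to $X_{N_{n-1}}$ at time $2^{n-1}$ misses the smaller base $X_{N_n}$. Consequently, for each $n$ the sets $\Phi^{(n)}_i=T_N^i(X_{N_n})$, $0\le i<2^n$, form a tower of $2^n$ floors over the base $X_{N_n}$; injectivity of $T_N$ gives disjointness, and a second induction---whose hypothesis is that the level-$n$ tower consists of disjoint intervals of length $\tfrac1{N_n}$ on which $T_N$ translates floor $i$ onto floor $i+1$ for $i<2^n-1$, using that $X_{N_n}\subset I_N$ carries no cut point of $T_N$---shows every floor is a single interval of length $\tfrac1{N_n}$. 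The towers are nested with consistent binary labels, $\Phi^{(n+1)}_{i}\subseteq\Phi^{(n)}_{i\bmod 2^n}$, because $T_N^{2^n}(X_{N_{n+1}})=T_{N_n}(X_{N_{n+1}})\subset X_{N_n}$.

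With this scaffolding in place I would define $h$ by the coding $h(\alpha)=\bigcap_n \Phi^{(n)}_{i_n}$ with $i_n=\sum_{k<n}\alpha_k2^k$. Along a fixed $\alpha$ the floors are nested intervals of length $\tfrac1{N_n}\to0$, so the intersection is a single point; distinct sequences first differ in some bit $m$ and then lie in disjoint floors at level $m+1$, giving injectivity; and agreement of $\alpha,\alpha'$ on the first $n$ bits forces $|h(\alpha)-h(\alpha')|<\tfrac1{N_n}$, giving uniform continuity. The intertwining $T_N\circ h=h\circ f$ is then the statement that $T_N$ advances a point one floor up the tower, i.e. $i_n\mapsto i_n+1$, and that reaching the top floor ($i_n=2^n-1$, all low bits equal to $1$) returns the point to the base with a carry into the next bit---precisely addition by one in $\sA^\N$.

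The step I expect to be the main obstacle is matching domain and range at the boundary: showing that $h$ restricted to $\sA^\N\smallsetminus\sN$ is a bijection onto the \emph{aperiodic} set $\Lambda_N$, rather than onto its closure $\bar\Lambda_N$ or a smaller set. Here \autoref{thm: zero dimensional} must be invoked to prove that a point is aperiodic exactly when it lies in every tower, so that each aperiodic point acquires a coding while points that fall out of some tower are periodic. The delicate case is $\sN$, the sequences ending in all ones: these code the ``carry to infinity'' orbit, where the tower picture would force $T_N$ to send the top corner $h(\overline{1})$ to $0=h(\overline 0)$, which is impossible since $0\notin(0,\tfrac1N)=T_N(X_N)$. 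This is exactly the orbit along which the conjugacy degenerates; one checks these points are the right endpoints of the complementary gaps of $\bar\Lambda_N$, hence periodic by \autoref{thm: zero dimensional}, so deleting $\sN$ from the domain leaves a genuine continuous bijection onto $\Lambda_N$ intertwining $T_N$ with $f$.
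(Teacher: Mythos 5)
Your renormalization scaffolding is sound, and it is in substance the same as the paper's: the identity $T_N^2=T_{N(N+1)}$ on $X_{N(N+1)}$ with constant return time $2$ is \autoref{lem:return map}, your tower floors $\Phi^{(n)}_i=T_N^i(X_{N_n})$ are exactly the paper's half-open intervals $I_w^\star$ (with $i=\sum_k w_k 2^k$), and your coding map is the paper's $h$ in disguise. The continuity, injectivity, and intertwining arguments you sketch all go through essentially as in \autoref{lem:key}, \autoref{prop:N}, and \autoref{thm:aperiodic 2}; even the one soft spot in your ``second induction'' (the floor sitting inside the top floor of the previous tower, where $T_N$ is not a priori a single translation) can be repaired, e.g.\ by writing $T_N$ on $\Phi^{(n+1)}_{2^n-1}$ as $T_N^{2^n}\circ\bigl(T_N^{2^n-1}\bigr)^{-1}$, a composition of two translations.

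The genuine gap sits exactly at the step you flag as the main obstacle, and your resolution of it is circular. You invoke \autoref{thm: zero dimensional} to conclude that every point falling out of some tower is periodic, so that each aperiodic point acquires a coding. But \autoref{thm: zero dimensional} is not an available input: in the paper, Theorems \ref{thm: zero dimensional} and \ref{thm:aperiodic} are proved simultaneously from the same machinery, and any proof of \autoref{thm: zero dimensional} must already determine which points are periodic --- which is precisely what you want to extract from it. What is missing is an actual periodicity argument for points outside the towers, and it needs an ingredient appearing nowhere in your proposal: rationality. The paper's route (\autoref{thm:periodic 2}) is that a point $x$ outside all towers lies, at some level $k$, in $I_w^\star \smallsetminus (I_{w0}^\star\cup I_{w1}^\star)$; the tower translations (\autoref{lem:key}) carry it to a point $x_0\in\bigl[\frac{1}{N_k(N_k+1)},\frac{1}{N_k+1}\bigr)$ in the base $X_{N_k}$; on that middle interval the return map $T_{N_k}$ uses only finitely many translations, all rational, so \autoref{prop:periodicity} forces $x_0$, hence $x$, to be periodic (\autoref{cor:periodic interval}). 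Without this argument, or a substitute, you have no proof that the image of $h|_{\sA^\N\smallsetminus\sN}$ is all of $\Lambda_N$ rather than a proper subset, i.e.\ no surjectivity. A smaller slip in the same passage: the points of $h(\sN)$ are \emph{left} endpoints of the complementary gaps (the gap lies immediately to their right), whereas right endpoints of gaps are the points $h(w1\overline{0})$, which are aperiodic; as literally stated, the criterion of \autoref{thm: zero dimensional} would not even apply to the points you describe.
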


We give an explicit description of the aperiodic set $\Lambda_N$ and an explicit description of the map~$h$ in \autoref{sect:conjugacy}.

The {\em least period} of a periodic point $x \in X_N$ under $T_N$ is the smallest $k>0$ such that $T_N^k(x)=x$.
An interesting question this work leaves open is (see also \autoref{rem:periods}):
\begin{question}
Which integers $p>0$ appear as least periods of 
periodic points under $T_N$? 
For each such $p$ what is the Lebesgue measure of the set of periodic points of least period $p$?
\end{question}

\subsection*{Connections to other work}
Another infinite interval exchange transformation (IET) is given by the {\em Van der Corput map}:
\begin{equation}
\label{eq:Reza}
S:[0,1) \to [0,1); \quad x \mapsto x-1+2^{k}+2^{k-1} \quad \text{where} \quad
k=\lfloor \log_2(1-x) \rfloor.
\end{equation}
This map is nearly conjugate to the $2$--adic odometer; see discussions in \cite[\S 5.2.3]{Fogg}, \cite[~\S 3.8]{Silva} and \cite[\S 2]{LT16}. This map turns out to be semi-conjugate to the restriction of $T_N$ to $\Lambda_N$ as described in \autoref{thm:aperiodic}.

Polygon and polytope exchange transformations (PETs) are higher dimensional analogs of~IETs.
There are numerous examples in the literature of such maps admitting an open dense set of periodic points but with interesting dynamics on the complimentary sets. See for example
\cite{AkiyamaHarriss},
\cite{Goetz00b},
\cite{Goetz03}, 
\cite{H12},
\cite{Schwartz14},
\cite{Yi}.
This sort of behaviour is impossible for IETs formed by permuting finitely many intervals \cite[Theorem 6.6]{MT}. Part of the purpose of this article is to illustrate that this phenomenon arises in natural infinite IETs.

It is not the case that every infinite IET has a minimal component where the restriction of the map to this component is conjugate to an odometer. 
For example, there exists an infinite minimal IET of $[0,1]$ with positive entropy such that all lengths are $2$--adic rationals (see \cite[\S 4]{DHV}) but odometers have entropy zero.

\section{Generalities}

\subsection*{Interval exchanges}
For us, an {\em interval exchange transformation} (IET) is a one-to-one piecewise translation $T:X \to X$ where $X \subset \R$ is a bounded interval. That is, we have a partition of $X$ into countably many subintervals $X=\bigsqcup_{j \in J} I_j$ and a choice of translations $\tau_j \in \R$ for $j \in J$ such that the map
$$T:X \to X; \quad x \mapsto x+\tau_j \quad \text{when $x \in I_j$}$$
is injective.

We call $T$ {\em rational} if each $\tau_j$ lies in $\Q$. The following is a classical observation:

\begin{proposition}
\label{prop:periodicity}
If $\,T$ is a rational IET and $\tau_j$ takes only finitely many values, then every orbit of $\,T$ is periodic. More generally, if $\,T:X \to X$ is a rational IET and $x \in X$, then $x$ has a periodic orbit unless
$$\{\tau_j:~ \text{there is an $n\geq 0$ such that $T^n(x) \in I_j$}\} \quad \text{is infinite.}$$
\end{proposition}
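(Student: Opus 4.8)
The plan is to deduce the first (special) statement from the second (general) one, and then prove the general statement directly. For the deduction, note that if $\tau_j$ takes only finitely many values, then for \emph{every} $x \in X$ the set $\{\tau_j : \text{there is an } n \geq 0 \text{ with } T^n(x) \in I_j\}$ is a subset of that finite collection of values, hence finite; the general statement then forces every orbit to be periodic. So it suffices to prove that, writing $S := \{\tau_j : \text{there is an } n \geq 0 \text{ with } T^n(x) \in I_j\}$, if $S$ is finite then $x$ is periodic.

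The heart of the argument is that rationality together with finiteness of $S$ confines the orbit of $x$ to a finite set. I would write each iterate as $T^n(x) = x + \sum_{i=0}^{n-1} \tau_{j_i}$, where $T^i(x) \in I_{j_i}$, and observe that every translation $\tau_{j_i}$ occurring here lies in $S$ by definition. Since $T$ is rational, the finitely many elements of $S$ share a common denominator $d$, so each such partial sum lies in $\tfrac{1}{d}\Z$; hence $T^n(x) \in x + \tfrac{1}{d}\Z$ for all $n \geq 0$. As $X$ is a \emph{bounded} interval, the set $\big(x + \tfrac{1}{d}\Z\big) \cap X$ is finite. I would emphasize that this step uses nothing about $x$ itself---in particular $x$ need not be rational---only that the increments have bounded denominator.

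Once the orbit $\{T^n(x) : n \geq 0\}$ is trapped in a finite set, the pigeonhole principle yields indices $0 \leq n_1 < n_2$ with $T^{n_1}(x) = T^{n_2}(x)$. The main point requiring care is upgrading this recurrence to genuine periodicity of $x$: pigeonhole alone only shows the orbit is eventually periodic, and it is here that injectivity of $T$ is essential. Applying injectivity to $T\big(T^{n_1-1}(x)\big) = T\big(T^{n_2-1}(x)\big)$ gives $T^{n_1-1}(x) = T^{n_2-1}(x)$, and iterating this cancellation $n_1$ times rewinds the collision all the way back to $x = T^{p}(x)$ with $p = n_2 - n_1 > 0$. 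Thus $x$ is periodic, which completes the proof.
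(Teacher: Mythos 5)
Your proposal is correct and follows essentially the same route as the paper: both rest on choosing a common denominator $d$ for the (finitely many) relevant translations, so that the orbit of $x$ is trapped in the finite set $\big(x+\tfrac{1}{d}\Z\big)\cap X$, and then using injectivity of $T$ to conclude periodicity. The paper compresses the last step into the observation that $T$ permutes this finite set, while you spell it out via pigeonhole plus rewinding the collision, but the underlying argument is the same.
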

\begin{proof}
Since each $\tau_j \in \Q$ and there are only finitely many translations $\tau_j$, there is a $d \in \Q$ such that $\frac{\tau_j}{d} \in \Z$ for all $j$. Observe that $T$ permutes the finitely many points in $(x+d \Z)\cap X$.
\end{proof}

When we were working on this project, we wondered how common it is to have a dense set of periodic points for a
rational IET which is {\em infinite} in the sense that $\{\tau_j\}$ is infinite. Some experimental work of Anna Tao (undergraduate, CCNY) seems to suggest that this sort of periodicity is rare. However, we still wonder if there are natural classes
of infinite rational IETs in which having a dense set of periodic points is typical.

At this point, there are a number of infinite rational IETs in the literature. 
Equation \eqref{eq:Reza} gives an infinite rational IET without periodic orbits, and there are other examples corresponding to $p$--adic odometers and the Chacon middle third transformation \cite[\S 3]{Downarowicz} \cite{LT16}. One way to get such a rational IET is from straight-line flows in directions of rational slope on an infinite-type translation surface all of whose saddle connections have holonomy in $\Q^2$. Symmetric surfaces of this form have been described in \cite{Chamanara04}, \cite{Bowman13} and \cite{LT16}.

\subsection*{Return maps}
If $Y \subset X$ is an interval, the {\em first return time} of $y\in Y$ to $Y$ is
$$r(y) = \min~\big(\{n > 0:~T^n(y) \in Y\} \cup \{+\infty\}\big).$$
Assuming $r<+\infty$ on $Y$, we define the {\em first return map} $\hat T:Y \to Y$ to be the map
$$\hat T:Y \to Y; \quad y \mapsto T^{r(y)}(y).$$
If $T$ is an IET in the sense above, then so is $\hat T$. Furthermore, $\hat T$ is rational whenever $T$ is rational.

\section{Basic return maps}
Here we prove some basic results about the maps $T_N:X_N \to X_N$ defined in the introduction.
First we fully describe the return map to $X_{N(N+1)}$.

\begin{lemma}
\label{lem:return map}
For any $N$, the first return map of $\,T_N$ to the interval $X_{N(N+1)}=[0,\frac{1}{N(N+1)})$ is given by $T_{N(N+1)}$. Furthermore, the return time is $2$ on all of $X_{N(N+1)}$.
\end{lemma}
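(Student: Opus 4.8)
The plan is to compute $T_N$ and $T_N^2$ explicitly on the small interval $X_{N(N+1)}$, checking directly that a single application of $T_N$ leaves $X_{N(N+1)}$ while two applications reproduce the formula defining $T_{N(N+1)}$. The whole statement then follows from matching formulas together with the fact, recorded in the introduction, that the image of $T_{N(N+1)}$ is $(0,\frac{1}{N(N+1)})$.

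First I would locate $T_N(x)$ for $x \in X_{N(N+1)} = [0,\frac{1}{N(N+1)})$. Since $0 \leq x < \frac{1}{N(N+1)}$, we have $\frac{1}{N}-x \in (\frac{1}{N+1},\frac{1}{N}]$, hence $\frac{1}{\frac{1}{N}-x} \in [N,N+1)$, so the index appearing in the definition of $T_N$ is $k=\lfloor \frac{1}{\frac{1}{N}-x}\rfloor = N$. Substituting $k=N$ collapses the formula to the pure translation $T_N(x)=x+\frac{1}{N+1}$. In particular $T_N(x)\in[\frac{1}{N+1},\frac{1}{N})$; since $\frac{1}{N+1}\geq\frac{1}{N(N+1)}$, this image interval is disjoint from $X_{N(N+1)}$, so no point of $X_{N(N+1)}$ returns after one step and the return time is at least $2$.

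Next I would apply $T_N$ a second time. Writing $y=T_N(x)=x+\frac{1}{N+1}\in X_N$, a short computation gives $\frac{1}{N}-y=\frac{1}{N(N+1)}-x$, which lies in $(0,\frac{1}{N(N+1)}]$; hence the index used by $T_N$ at $y$ is $k'=\lfloor\frac{1}{\frac{1}{N}-y}\rfloor=\lfloor\frac{1}{\frac{1}{N(N+1)}-x}\rfloor$, which is exactly the index $T_{N(N+1)}$ uses at $x$. Substituting $y=x+\frac{1}{N+1}$ into the formula for $T_N$ and using $\frac{1}{N+1}-\frac{1}{N}=-\frac{1}{N(N+1)}$ yields $T_N^2(x)=x-\frac{1}{N(N+1)}+\frac{1}{k'}+\frac{1}{k'+1}$, which is precisely $T_{N(N+1)}(x)$.

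Finally, since the image of $T_{N(N+1)}$ is contained in $(0,\frac{1}{N(N+1)})\subset X_{N(N+1)}$, the point $T_N^2(x)$ lies back in $X_{N(N+1)}$; together with the previous step this shows the first return time is exactly $2$ everywhere and that the return map is $T_{N(N+1)}$. The only delicate point is the simultaneous bookkeeping of the floor function across the two steps, including the endpoint $x=0$ and the boundary case $N=1$ (where $\frac{1}{N+1}=\frac{1}{N(N+1)}$); I expect this to be the main thing to verify carefully, though each case reduces to checking that the relevant strict and non-strict inequalities place $\frac{1}{\frac{1}{N}-x}$ in the correct unit interval, so no genuine obstruction arises.
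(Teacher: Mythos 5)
Your proposal is correct and follows essentially the same route as the paper's proof: compute that the index is $k=N$ on $X_{N(N+1)}$ so the first step is the translation $x \mapsto x+\frac{1}{N+1}$ landing outside $X_{N(N+1)}$, then identify the second-step index via $\frac{1}{N}-T_N(x)=\frac{1}{N(N+1)}-x$ and match the resulting formula with $T_{N(N+1)}$. Your explicit appeal to the fact that the image of $T_{N(N+1)}$ lies in $(0,\frac{1}{N(N+1)})$ to close the return-time argument is a minor (and harmless) addition; the paper gets the same conclusion implicitly from $T_{N(N+1)}$ being a map $X_{N(N+1)} \to X_{N(N+1)}$.
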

\begin{proof}
For each $x \in X_{N(N+1)}$ we see that $k=\lfloor{\frac{1}{1/N-x}}\rfloor=N$ and thus $T_N(x)=x+\frac{1}{N+1}$. This shows $T_N(X_{N(N+1)})=[\frac{1}{N+1},\frac{1}{N})$ and in particular, no point has least period $1$. We have that 
$$\left[\frac{1}{N+1},\frac{1}{N}\right)=\bigcup_{\ell \geq N(N+1)} \left[\frac{1}{N}-\frac{1}{\ell},\frac{1}{N}-\frac{1}{\ell+1}\right).$$
Set $x'=T_N(x) \in [\frac{1}{N+1},\frac{1}{N})$ and $k'=\lfloor{\frac{1}{1/N-x'}}\rfloor \geq N(N+1)$. 
We compute 
\begin{equation}
\label{eq:square}
T_N^2(x)=T_N(x')=x' - \frac{1}{N} + \frac{1}{k'} + \frac{1}{k'+1}
=x - \frac{1}{N(N+1)} + \frac{1}{k'} + \frac{1}{k'+1}.
\end{equation}
Now observing that
$$k'=\left\lfloor \frac{1}{\frac{1}{N}-x'} \right\rfloor=
\left\lfloor \frac{1}{\frac{1}{N}-x-\frac{1}{N+1}} \right\rfloor=
\left\lfloor \frac{1}{\frac{1}{N(N+1)}-x} \right\rfloor,$$
we see from \eqref{eq:square} that $T_N^2(x)$ coincides with $T_{N(N+1)}(x)$.
\end{proof}

We get periodic points as a consequence:

\begin{corollary}
\label{cor:periodic interval}
For any $N$, every point in $[\frac{1}{N(N+1)},\frac{1}{N+1})$ has a periodic orbit under $T_N$.
\end{corollary}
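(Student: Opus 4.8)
The plan is to show that the interval $J=[\frac{1}{N(N+1)},\frac{1}{N+1})$ is forward-invariant under $T_N$ and that $T_N$ uses only finitely many distinct translations on $J$; the periodicity will then be immediate from the general form of \autoref{prop:periodicity}. Note that $T_N$ itself uses infinitely many translations on all of $X_N$, so the simple form of the proposition does not apply directly and passing to the invariant subinterval $J$ is essential.

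First I would identify the pieces of the defining partition of $T_N$ that make up $J$. Write $I_k=[\frac 1N-\frac 1k,\frac 1N-\frac{1}{k+1})$ for the interval on which $k=\lfloor\frac{1}{1/N-x}\rfloor$ is constant. The endpoint identities $\frac 1N-\frac{1}{N+1}=\frac{1}{N(N+1)}$ and $\frac 1N-\frac{1}{N(N+1)}=\frac{1}{N+1}$ show that $J=\bigsqcup_{k=N+1}^{N(N+1)-1} I_k$. In particular, on $J$ the index $k$ takes only the finitely many values $N+1,\ldots,N(N+1)-1$, so $T_N$ uses only the finitely many rational translations $\tau_k=-\frac 1N+\frac 1k+\frac{1}{k+1}$ there.

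Next I would check that $J$ is forward-invariant. Since $T_N$ acts as the translation $x\mapsto x+\tau_k$ on $I_k$, a short endpoint computation gives $T_N(I_k)=[\frac{1}{k+1},\frac 1k)$. For every $k$ with $N+1\le k\le N(N+1)-1$ one has $\frac 1k\le\frac{1}{N+1}$ and $\frac{1}{k+1}\ge\frac{1}{N(N+1)}$, so $[\frac{1}{k+1},\frac 1k)\subseteq J$. Hence $T_N(I_k)\subseteq J$ for each such $k$, and therefore $T_N(J)\subseteq J$; in fact these images tile $J$, so $T_N$ even restricts to a bijection of $J$.

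Finally, because $J$ is forward-invariant, the forward orbit of any $x\in J$ never leaves $J$ and so meets only the intervals $I_k$ with $N+1\le k\le N(N+1)-1$, encountering the finite translation set $\{\tau_k\}$. The general form of \autoref{prop:periodicity}, applied to $T=T_N$ at this $x$, then yields that $x$ has a periodic orbit, which is exactly the claim. The crux of the argument is the forward-invariance of $J$, that is, the containments $T_N(I_k)\subseteq J$; once that and the finiteness of the translation set are in hand, the appeal to \autoref{prop:periodicity} is routine, so I do not expect any serious obstacle beyond the elementary bookkeeping of endpoints.
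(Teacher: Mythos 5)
Your proposal is correct and takes essentially the same route as the paper: show that $[\frac{1}{N(N+1)},\frac{1}{N+1})$ is invariant under $T_N$, observe that only finitely many rational translations occur on it, and invoke the general form of \autoref{prop:periodicity}. The only difference is in the bookkeeping: you verify invariance by computing $T_N(I_k)=[\frac{1}{k+1},\frac{1}{k})$ directly for each constituent interval, whereas the paper deduces it from the images of the two neighbouring intervals already computed in \autoref{lem:return map} together with the order-reversing, injective nature of $T_N$.
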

\begin{proof}
Observe that $T_N\big([\frac{1}{N(N+1)},\frac{1}{N+1})\big)=[\frac{1}{N(N+1)},\frac{1}{N+1})$, because $T_N$ reverses the order
of intervals and we already know $T_N\big([0,\frac{1}{N(N+1)})\big)=[\frac{1}{N+1},\frac{1}{N})$ and
$T_N\big([\frac{1}{N+1},\frac{1}{N})\big)=(0,\frac{1}{N(N+1)})$. Moreover, there are only finitely many distinct translations occurring on this interval, namely the translations associated to $(\frac{1}{k+1}, \frac{1}{k}]$ for values of $k$ satisfying $N+1 \leq k <N(N+1)$. 
\autoref{prop:periodicity} then guarantees that every point in $[\frac{1}{N(N+1)},\frac{1}{N+1})$ is periodic.
\end{proof}

\begin{remark}\label{rem:periods}
In the case $N=2$, every point in the interval $[\frac 16, \frac 13)$ has a periodic orbit under~$T_2$ that has least period $10$. In general, however, there may be points in $[\frac{1}{N(N+1)},\frac{1}{N+1})$ that do not have the same least period. 
For example, for $N=3$, points may have least period either $920$ or $930$ under $T_3$.  

To describe more examples for larger $N$, we define another family of IETs
\begin{equation*}
R_{m, n} : \left[\frac 1m, \frac 1n\right) \to \left[\frac 1m, \frac 1n\right)
\end{equation*}
for all $m > n > 0$ by breaking this interval into subintervals of the form 
$[\frac{1}{k+1}, \frac 1k)$ for $m > k \geq n$
and reversing the order of the subintervals. Note that the restriction of $T_N$ to the interval
$[\frac{1}{N(N+1)},\frac{1}{N+1})$ is $R_{N(N+1), N+1}$. In fact, there are many subintervals in $X_1$  of the form $[\frac 1m, \frac 1n)$ that are preserved by a 
power of $T_1$ and where the first return map is $R_{m,n}$.

For example, the interval $[\frac 1{42}, \frac 17)$ is sent to itself by $T_6$. The restriction of $T_6$ to $[\frac 1{42}, \frac 17)$ coincides with $R_{42,7}$ and with the restriction of $T_1^4$ to this interval.
Analyzing the periodic orbits in $[\frac 1{42}, \frac 17)$ with \cite{surface_dynamics}, we see that there are nine different least periods occurring under $T_6$, namely: 
\begin{align*}
272, \qquad 2002,&& 105252, &&125986, &&9515623638834, \\
70542359811724,&&  
35513020871128, &&13883349533760,  &&43184371863572.
\end{align*} 
Furthermore, the interval $[\frac 1{42}, \frac 17)$ itself has subintervals of the same type that are preserved by some power of $T_1$. 
Namely, $[\frac 1{15}, \frac 1{10})$ is sent to itself under $T_1^8$ which coincides with $T_6^2$ and with $R_{15,10}$. 
Also each of the intervals $[\frac 1{18}, \frac 1{15})$, $[\frac 1{24}, \frac 1{18})$ and $[\frac 1{42}, \frac 1{24})$ are sent to themselves under $T_1^{16}$ which coincides with $T_6^4$ and the first return map is of the form $R_{m,n}$. In case of the interval $[\frac 1{42}, \frac 1{24})$, there are again five different least periods occurring.

In all these cases, every possible least period has to be a divisor of the least common multiple of the denominators $n, n+1, \ldots, m$. It would be interesting to classify for which pairs $(m, n)$ every point in the interval $[\frac 1{m}, \frac 1{n})$ has the same least period under $R_{m,n}$. 
\end{remark}

\section{Cantor sets}
\label{sect:cantor}
In this section we work through a general construction of a Cantor set. We will see later in the article that
the set $\bar \Lambda_N$ arises as such a Cantor set.

The {\em free monoid} on the alphabet $\sA=\{0,1\}$ is the set $\sA^\ast$ of all finite sequences in $\sA$
equipped with the binary operation of concatenation. 
An element $w \in \sA$ is called a {\em word} and has a {\em length} $|w| \in \N$ representing the number of elements strung together. We write $\sA^k$ to denote the collection of all $w \in \sA^\ast$
with length $k$.
The unique element $\varepsilon \in \sA^\ast$ with length zero
is called the {\em empty word} and is the unique identity element of the monoid. 

Every element $w \in \sA^\ast$ can be written as
$$w=w_0 w_1 \ldots w_{|w|-1} \quad \text{with each $w_i \in \sA$}.$$
Concatenation is the operation defined by
$$xy = x_0 x_1 \ldots x_{|x|-1} y_0 y_1 \ldots y_{|y|-1}.$$
More formally, $xy$ is defined to be the finite sequence $z$ of length $|x|+|y|$ such that
$$z_i=\begin{cases}
x_i & \text{if $i<|x|$,} \\
y_{i-|x|} & \text{if $|x|\leq i \leq |x|+|y|$}.
\end{cases}$$
We use exponential notation for repeated concatenation so that $w^k$ denotes the concatenation of $k$ copies of $w$.
For example $0^9$ denotes the word $w$ where $|w|=9$ and $w_i=0$ for $i=0, \ldots, 8$.

We now informally describe the Cantor sets that we are interested in. We use a variant of the standard construction of the Cantor ternary set in $\R$, where the Cantor set is obtained by removing the middle third interval of $[0,1]$, then removing the middle third intervals of the remaining segments, and so on.
Our Cantor set is similarly defined as the intersection $\bigcap_{k \geq 0} C_k$ and each~$C_k$ is a finite union of closed intervals.
The sets $C_k$ are defined inductively starting with a single interval $C_0=[a_0,b_0]$ and the set $C_{k+1}$ is formed
by removing middle intervals of equal length from each of the intervals making up $C_k$.
In contrary to the construction of the Cantor ternary set, the ratio of the lengths of intervals making up $C_{k+1}$ to the lengths of intervals making up $C_k$ is not necessarily the same for all $k$.
We denote these ratios by numbers $s_k$.

\begin{figure}
\includegraphics[width=3in]{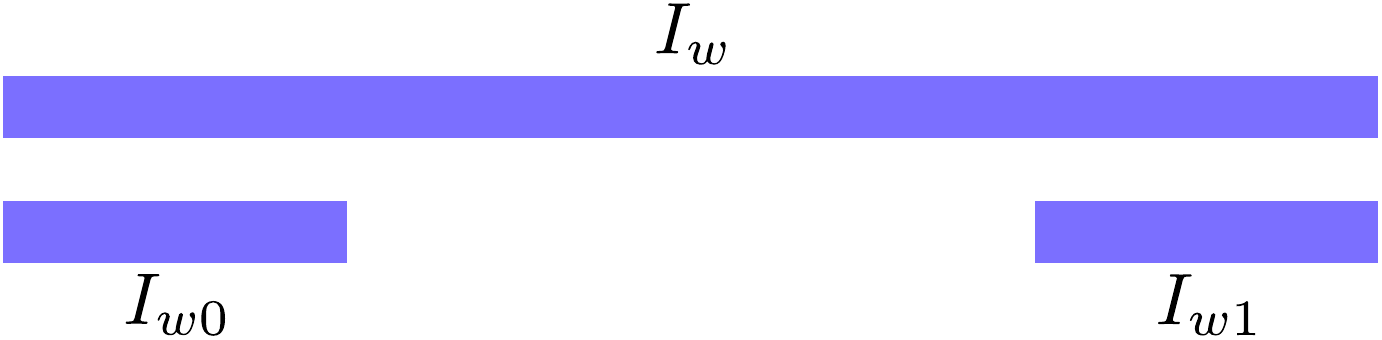}
\caption{The intervals $I_{w0}$ and $I_{w1}$ produced from $I_w$ when $s_{|w|}=\frac{1}{4}$.}
\label{fig:subdivision}
\end{figure}

We now give a more formal construction of our Cantor set. 
Fix an initial interval $[a_0, b_0]$ and a sequence $s=\{s_k\}_{k \in \N}$ of real numbers satisfying 
\begin{equation}
0 < s_k \leq \frac{1}{2} \text{ for all $k \in \N$} \quad \text{and} \quad \limsup s_k<\frac{1}{2}.
\label{eq:conditions on s}
\end{equation}
We inductively define an interval $I_w$ for each $w \in \sA^\ast$. We define $I_\varepsilon = [a_0,b_0]$. Assuming $I_w$ is defined to be $[a,b]$, we define 
\begin{equation}
\label{eq:inductive}
I_{w0}=[a,a+s_{|w|}(b-a)] \quad \text{and} \quad
I_{w1}=[b-s_{|w|}(b-a),b];
\end{equation}
see \autoref{fig:subdivision}.
Observe that if $s_{|w|}<\frac{1}{2}$ then $I_{w0} \cup I_{w1}$ is the interval $I_w$ with the middle open interval removed whose length is $1-2s_{|w|}$ times the length of the whole interval. On the other hand, if $s_{|w|}=\frac{1}{2}$ the intervals
$I_{w0}$ and $I_{w1}$ are formed by cutting $I_w$ at the midpoint. In particular, the length of the interval $I_w$ only depends on $|w|$ and the fixed sequence $s$. The length is given by  $\ell_{|w|}$ where
\begin{equation}
\label{eq:lengths}
\ell_0=b_0-a_0 
\quad \text{and} \quad
\ell_{k} = (b_0-a_0) \prod_{j=0}^{k-1} s_j \quad \text{for $k \geq 1$}.
\end{equation}

We define the Cantor set $\sC=\sC\big(s,[a_0,b_0]\big)$ by defining
$$C_k = \bigcup_{w \in \sA^k} I_w \quad \text{and} \quad \sC = \bigcap_{k \in\N} C_k.$$
It is a standard observation that as long as the sequence $s$ satisfies the conditions in \eqref{eq:conditions on s} that $\sC$ is a Cantor set: it is compact, totally disconnected and perfect.
The following is a standard result on the Hausdorff dimension of $\sC$ (compare \cite[\S 4.10-11]{Matilla}).

\begin{proposition}
\label{prop:Hausdorff dimension}
If $\,\lim_{k \to \infty} s_k=0$ then the Hausdorff dimension of $\sC$ is zero.
\end{proposition}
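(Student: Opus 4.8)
The plan is to bound the Hausdorff dimension of $\sC$ directly from the definition, using the natural covering of $\sC$ by the level-$k$ intervals. For each $k$, the collection $\{I_w : w \in \sA^k\}$ covers $\sC$ and consists of $2^k$ intervals, each of diameter $\ell_k = (b_0-a_0)\prod_{j=0}^{k-1} s_j$. For any exponent $\delta > 0$, the associated $\delta$-dimensional Hausdorff premeasure is therefore bounded by
\begin{equation*}
\sum_{w \in \sA^k} (\operatorname{diam} I_w)^\delta = 2^k \ell_k^\delta = 2^k (b_0-a_0)^\delta \Big(\prod_{j=0}^{k-1} s_j\Big)^\delta.
\end{equation*}
Since $\lim_{k\to\infty} s_k = 0$, the mesh $\ell_k \to 0$, so these covers are admissible for computing the Hausdorff measure, and it suffices to show that for every fixed $\delta > 0$ the right-hand side tends to $0$ as $k \to \infty$; this forces $\mathcal{H}^\delta(\sC) = 0$ for all $\delta > 0$, hence $\dim_H \sC = 0$.

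The core estimate is that $2^k \big(\prod_{j=0}^{k-1} s_j\big)^\delta \to 0$. First I would fix an arbitrary $\delta > 0$ and choose an integer $J$ large enough that $s_j < 2^{-2/\delta}$ for all $j \geq J$, which is possible precisely because $\lim_{k\to\infty} s_k = 0$. Splitting the product into the initial block $\prod_{j=0}^{J-1} s_j$ (a fixed constant, call it $P$) and the tail $\prod_{j=J}^{k-1} s_j$, I can bound the tail factors by $2^{-2/\delta}$ each, giving
\begin{equation*}
2^k \Big(\prod_{j=0}^{k-1} s_j\Big)^\delta \leq 2^k P^\delta \big(2^{-2/\delta}\big)^{\delta(k-J)} = P^\delta\, 2^{\delta J \cdot 2/\delta \cdot 0}\, 2^{k} 2^{-2(k-J)} = C\, 2^{-k},
\end{equation*}
where $C = P^\delta 2^{2J}$ is a constant independent of $k$. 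Thus the sum over level-$k$ intervals is at most $C(b_0-a_0)^\delta 2^{-k}$, which goes to $0$.

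The main subtlety to handle carefully is simply the bookkeeping of the split: one must make sure the threshold $2^{-2/\delta}$ is chosen so that each tail factor, raised to the power $\delta$, contributes a factor strictly smaller than $\frac{1}{2}$, thereby beating the $2^k$ growth. There is no genuine obstacle here, only the need to state the choice of $J$ explicitly and to note that $P$ depends on $\delta$ (through $J$) but not on $k$. I would also remark that the hypothesis $\limsup s_k < \frac{1}{2}$ from \eqref{eq:conditions on s}, needed to guarantee $\sC$ is genuinely a Cantor set, is not required for this dimension bound; the stronger hypothesis $\lim_{k\to\infty} s_k = 0$ already does all the work, and in fact the argument shows the much stronger conclusion that the covering sums decay geometrically in $k$ for \emph{every} positive exponent $\delta$.
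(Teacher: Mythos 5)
Your proof is correct and follows essentially the same route as the paper: both cover $\sC$ by the $2^k$ level-$k$ intervals of length $\ell_k$ and show that $2^k \ell_k^\delta \to 0$ for every $\delta > 0$ using $s_j \to 0$ (the paper phrases this as $\prod_{j=0}^{k-1}(2s_j^\delta) \to 0$ via Hausdorff content, while you make the threshold choice of $J$ explicit and work with the measure directly). The differences are purely cosmetic bookkeeping, so there is nothing to add.
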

\begin{proof}
Recall that the {\em $d$--dimensional Hausdorff content} of $\sC$ is 
$$C_H^d(\sC)=\inf \Big\{ \sum_{i} r_i^d:~\text{there is a covering of $\sC$ by balls of radius $r_i > 0$} \Big\}.$$
The {\em Hausdorff dimension} of $\sC$ is $\inf \{d:~C_H^d(\sC)=0\}.$

Fix $d>0$. Now consider an integer $k>0$ and consider that $\bigcup_{w \in \sA^k} I_w$ contains $\sC$. Each interval in the union has length $\ell_k$ and there are $2^k$ words in $\sA^k$, so for this covering $\sum_{i} r_i^d$ yields $2^k (\ell_k/2)^d$. Observe from \eqref{eq:lengths} that
$$\lim_{k \to \infty} 2^k \left({\textstyle \frac{\ell_k}{2}}\right)^d = \left({\textstyle \frac{b_0-a_0}{2}}\right)^d \lim_{k \to \infty} \prod_{j=0}^{k-1}  \left(2 s_j^d\right)$$
and since $s_j \to 0$, this limit is zero. This shows that the $d$--dimensional Hausdorff content is zero for any $d>0$
and so the Hausdorff dimension is zero.
\end{proof}

We can now define the map $h$ that was announced in \autoref{thm:aperiodic} to give a continuous bijection from $\sA^\N \smallsetminus \sN$ to the aperiodic set of $T_N$.
Recall that $\sA^\N$ is the set of $2$--adic integers, consisting of all sequences $\alpha=(\alpha_0, \alpha_1, \ldots)$ with each $\alpha_k \in \sA = \{0,1\}$.
Define the map $h:\sA^\N \to \R$ depending on a sequence $s$ as in \eqref{eq:conditions on s} and on an interval $[a_0,b_0]$ by 
\begin{equation}
\label{eq:h}
h(\alpha)=a_0 + \sum_{k=0}^\infty \alpha_k (\ell_k-\ell_{k+1}).
\end{equation}
We will see in \autoref{lem:cantor set image} that the function $h$ is closely related to our construction of the Cantor set $\sC(s,[a_0,b_0]\big)$.
We will also see in \autoref{sect:conjugacy} that $h$ can be used to give a $2$--adic infinite address to every point in the aperiodic set of $T_N$, and that $h$ describes a semi-conjugacy to the $2$--adic odometer.
But first we observe that $h$ can be used to describe the endpoints of the intervals $I_w$ used in the construction of the Cantor set $\sC(s,[a_0,b_0]\big)$.

\begin{proposition}
\label{prop:interval formula}
For each $w \in \sA^\ast$, we have $I_w=\big[h(w\overline{0}),h(w \overline{1})\big]$, where $w \overline{0}$ and $w \overline{1}$ denote the elements of $\sA^\N$ whose first $|w|$ entries are given by $w$ and whose remaining entries are all zeros or all ones respectively.
\end{proposition}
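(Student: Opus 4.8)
The plan is to prove the claim by induction on the length $n=|w|$, verifying simultaneously that the left endpoint of $I_w$ equals $h(w\overline{0})$ and that the length of $I_w$ equals $\ell_{|w|}$. Once these two facts are in hand, a short telescoping computation identifies the point $h(w\overline{0})+\ell_{|w|}$ with $h(w\overline{1})$, and the statement follows.

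First I would record the telescoping identity that handles the right endpoint. Since $s_k\le\frac12$ for all $k$, equation \eqref{eq:lengths} gives $\ell_k\le(b_0-a_0)2^{-k}$, so $\ell_k\to 0$ and the series defining $h$ in \eqref{eq:h} converges. Because $w\overline{0}$ and $w\overline{1}$ agree with $w$ on their first $|w|$ coordinates and are all zeros, respectively all ones, beyond position $|w|$, subtracting the two instances of \eqref{eq:h} leaves only the tail, so $h(w\overline{1})-h(w\overline{0})=\sum_{k\ge|w|}(\ell_k-\ell_{k+1})=\ell_{|w|}$ by telescoping and $\ell_k\to0$. Thus it suffices to show that $h(w\overline{0})$ is the left endpoint of $I_w$ and that $\ell_{|w|}$ is its length.

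For the induction, the base case $w=\varepsilon$ is immediate: $I_\varepsilon=[a_0,b_0]$ has left endpoint $a_0=h(\overline{0})$ and length $\ell_0=b_0-a_0$. For the inductive step, assume $I_w=[a,b]$ with $a=h(w\overline{0})$ and $b-a=\ell_{|w|}$. Using \eqref{eq:inductive} together with the relation $s_{|w|}\ell_{|w|}=\ell_{|w|+1}$ coming from \eqref{eq:lengths}, the interval $I_{w0}$ retains the left endpoint $a$, while $I_{w1}$ has left endpoint $b-\ell_{|w|+1}=a+(\ell_{|w|}-\ell_{|w|+1})$, and both have length $\ell_{|w|+1}$. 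On the other side, reading off \eqref{eq:h}, appending a $0$ to $w$ leaves $h\big((w0)\overline{0}\big)=h(w\overline{0})=a$ unchanged, whereas appending a $1$ adds exactly the single term $\ell_{|w|}-\ell_{|w|+1}$, giving $h\big((w1)\overline{0}\big)=a+(\ell_{|w|}-\ell_{|w|+1})$. These agree with the left endpoints just computed, completing the induction.

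Combining the induction with the telescoping identity yields $I_w=\big[h(w\overline{0}),\,h(w\overline{0})+\ell_{|w|}\big]=\big[h(w\overline{0}),h(w\overline{1})\big]$, as claimed. I do not expect a serious obstacle here; the argument is bookkeeping once the right framework is set up. The only points that genuinely require care are the convergence of the series defining $h$, which is guaranteed by $\ell_k\to 0$ under the hypotheses \eqref{eq:conditions on s}, and keeping straight that adjoining one symbol to $w$ alters precisely one summand of $h(w\overline{0})$ while the recursion \eqref{eq:inductive} moves the left endpoint by the matching amount.
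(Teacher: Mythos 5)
Your proof is correct and takes essentially the same route as the paper's: the same telescoping identity $h(w\overline{1})-h(w\overline{0})=\ell_{|w|}$ reduces the claim to pinning down a single endpoint, followed by induction on $|w|$. The only (cosmetic) difference is that you carry the left endpoint and the length through the induction and compute the shift $\ell_{|w|}-\ell_{|w|+1}$ explicitly for the child $I_{w1}$, whereas the paper carries both endpoints and exploits the identifications $w\overline{0}=w0\overline{0}$ and $w\overline{1}=w1\overline{1}$ so that the inductive step requires no arithmetic at all.
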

\begin{proof}
Fix $w$ and let $k=|w|$.
Observe that the lengths of $I_w$ and $\big[h(w\overline{0}),h(w \overline{1})\big]$ match since the length of $I_w$ is $\ell_k$ and 
$$h\left(w \overline{1}\right)-h\left(w \overline{0}\right)=\sum_{j=k}^\infty (\ell_j-\ell_{j+1})=\ell_k$$
since $\lim_{j \to \infty} \ell_j=0$. It follows that checking $I_w=\big[h(w\overline{0}),h(w \overline{1})\big]$
is equivalent to checking that the left endpoint of $I_w$ is $h(w \overline{0})$ or checking that the right endpoint of $I_w$ is $h(w \overline{1})$.

We proceed by induction on the length of the word $w$. Observe that $h(\overline{0})=a_0$ and hence $I_\varepsilon=\big[h(\overline{0}),h( \overline{1})\big]$.
Now suppose that $I_w=[a,b]$, $h(w\overline{0})=a$ and $h(w\overline{1})=b$. 
We have to check that $I_{w0}=\big[h(w0\overline{0}),h(w0 \overline{1})\big]$
and $I_{w1}=\big[h(w1\overline{0}),h(w1 \overline{1})\big]$.
The statement for $I_{w0}$ holds because the left endpoint of $I_{w0}$ coincides with the left endpoint of
$I_w$ by definition in \eqref{eq:inductive}, and by hypothesis we have $a=h(w\overline{0})=h(w0\overline{0})$.
The statement for $I_{w1}$ holds because the right endpoint of $I_{w1}$ coincides with the right endpoint of
$I_w$ by \eqref{eq:inductive}, and by hypothesis we have $b=h(w\overline{1})=h(w1\overline{1})$.
\end{proof}

\begin{lemma}
\label{lem:cantor set image}
The image $h(\sA^\N)$ is the Cantor set $\sC = \sC(s,[a_0,b_0]\big)$. Furthermore, $h$ is one-to-one at all $x \in \sC$ except at those $x$ of the form $x=h(w0\overline{1})=h(w1\overline{0})$ for some $w \in \sA^k$ with $s_k=\frac{1}{2}$. The latter case happens only finitely often and in this case, $h$ is two-to-one at $x$.
\end{lemma}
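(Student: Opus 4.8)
The plan is to break the statement into three parts: the set equality $h(\sA^\N)=\sC$, the exact characterization of where $h$ fails to be injective, and the counting claims (finitely many such points, each with exactly two preimages). Throughout I would lean on \autoref{prop:interval formula}, which gives $I_w=\big[h(w\overline 0),h(w\overline 1)\big]$, and on the telescoping identity $\sum_{j\ge k}(\ell_j-\ell_{j+1})=\ell_k$, valid because $\ell_j\to 0$ by \eqref{eq:conditions on s}.

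For the image equality I would first prove $h(\sA^\N)\subseteq\sC$. Given $\alpha\in\sA^\N$, write $w^{(k)}=\alpha_0\cdots\alpha_{k-1}$ for its length-$k$ prefix, and split the defining sum \eqref{eq:h} at index $k$ to obtain $h(\alpha)=h(w^{(k)}\overline 0)+\sum_{j\ge k}\alpha_j(\ell_j-\ell_{j+1})$. Since each increment $\ell_j-\ell_{j+1}$ is nonnegative, the tail lies in $[0,\ell_k]$, so by \autoref{prop:interval formula} we get $h(\alpha)\in I_{w^{(k)}}\subseteq C_k$ for every $k$, whence $h(\alpha)\in\sC$. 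For the reverse inclusion I would take $x\in\sC$ and build a prefix-consistent address: for each $k$ choose $w^{(k)}\in\sA^k$ with $x\in I_{w^{(k)}}$ and with $w^{(k+1)}$ extending $w^{(k)}$, which is possible because $I_{w0},I_{w1}\subset I_w$. The resulting $\alpha\in\sA^\N$ satisfies $x\in\bigcap_k I_{w^{(k)}}$, and since $\ell_k\to 0$ this intersection is a single point, which by the first part equals $h(\alpha)$; hence $x\in h(\sA^\N)$.

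The core is a first-difference analysis of injectivity. Suppose $\alpha\ne\beta$ with $h(\alpha)=h(\beta)$, let $k$ be the first index where they differ with common prefix $w$ of length $k$, and assume $\alpha_k=0$, $\beta_k=1$. Writing $P=h(w\overline 0)$ for the left endpoint of $I_w$ and splitting \eqref{eq:h} as before gives $h(\alpha)=P+T_\alpha$ with $T_\alpha\in[0,\ell_{k+1}]$ and $h(\beta)=P+(\ell_k-\ell_{k+1})+T_\beta$ with $T_\beta\in[0,\ell_{k+1}]$. Equating and using $\ell_{k+1}=s_k\ell_k$ forces $2s_k\ge 1$, hence $s_k=\tfrac12$ by \eqref{eq:conditions on s}; the equation then collapses to $T_\alpha=\ell_{k+1}$ and $T_\beta=0$. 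Because every increment $\ell_j-\ell_{j+1}$ is strictly positive, these extremal tail values force $\alpha_j=1$ and $\beta_j=0$ for all $j>k$, i.e. $\alpha=w0\overline 1$ and $\beta=w1\overline 0$, with the common value being the shared midpoint of $I_{w0}$ and $I_{w1}$. This is exactly the asserted form.

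Finally I would read off the counting claims. Finiteness follows because $\limsup s_k<\tfrac12$ forces $s_k=\tfrac12$ for only finitely many $k$, and each such $k$ contributes only the $2^k$ words $w\in\sA^k$. For the two-to-one count, if $x=h(w0\overline 1)=h(w1\overline 0)$ had a third preimage $\gamma$, then $\gamma$ collides with $w0\overline 1$, so by the characterization $\{w0\overline 1,\gamma\}=\{u0\overline 1,u1\overline 0\}$ for some $u$; since $w0\overline 1$ ends in all ones it must be the $u0\overline 1$ partner, and the decomposition of a sequence ending in $\overline 1$ as $(\text{prefix})\,0\,\overline 1$ is unique (its last $0$ determines $u$), forcing $u=w$ and $\gamma=w1\overline 0$. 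I expect this last step—ruling out a third preimage—to be the only genuinely delicate point; the remaining parts reduce to the telescoping tail bound together with \autoref{prop:interval formula}. The crux is recognizing that the first-difference characterization is rigid enough to determine a colliding pair completely, so distinct collision points cannot share a value.
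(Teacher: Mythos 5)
Your proposal is correct, but its core argument is genuinely different from the paper's. The paper works geometrically, point by point in $\sC$: for $x \in \sC$ it considers, at each level $k$, the words $w \in \sA^k$ with $x \in I_w$. Either this word is unique for every $k$ (then the nested intervals single out a unique preimage, handling both surjectivity and injectivity at $x$ at once), or there is a smallest $k$ with two such words, which by minimality are siblings $w0$ and $w1$; then $x \in I_{w0} \cap I_{w1}$, which by \eqref{eq:inductive} is nonempty only if $s_k=\frac{1}{2}$ and then equals the midpoint of $I_w$, so \autoref{prop:interval formula} gives $x = h(w0\overline{1})=h(w1\overline{0})$. The two-to-one count is then obtained from an inductive claim (only sketched in the paper) that the words of length $k+j$ whose intervals contain $x$ are exactly $w01^j$ and $w10^j$. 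You instead analyze collisions arithmetically in sequence space: splitting the series \eqref{eq:h} at the first disagreement and using the telescoping bound on the tails to force $s_k=\frac{1}{2}$ and the extremal tail values, hence exactly the pair $\{w0\overline{1}, w1\overline{0}\}$. Your route buys a cleaner finish: ruling out a third preimage follows from the rigidity of the pair structure (a sequence ending in $\overline{1}$ has a well-defined last zero, so its partner is determined), with no need for the paper's inductive interval claim, and your injectivity analysis never needs to locate $x$ inside $\sC$ first. The paper's route, in exchange, produces the geometric description of collision points as the closing-up midpoints of the subdivision, which it reuses in the proof of \autoref{prop:N}.

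One step you should tighten is the surjectivity direction. You choose prefix-consistent words $w^{(k)}$ with $x \in I_{w^{(k)}}$, justified only by $I_{w0}, I_{w1} \subset I_w$. That containment does not by itself guarantee that a point of $\sC \cap I_{w^{(k)}}$ lies in $I_{w^{(k)}0} \cup I_{w^{(k)}1}$: a priori $x$ could lie only in a level-$(k+1)$ interval descending from a \emph{different} level-$k$ word. The claim is true, but it needs the structural fact that distinct intervals of the same level intersect at most in endpoints; then such an $x$ would be an endpoint of $I_{w^{(k)}}$, and both endpoints of $I_{w^{(k)}}$ do lie in its children by \eqref{eq:inductive}. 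Alternatively, apply K\"onig's lemma to the infinite, finitely branching tree of all words $w$ with $x \in I_w$. This is a fixable gap, at about the same level of rigor the paper itself permits elsewhere, so it does not undermine your approach.
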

\begin{proof}
First we show that for any $\alpha \in \sA^\N$ we have $h(\alpha)\in \sC$. We must show $h(\alpha) \in C_k$ for every~$k$. Fix a $k$ and set $w=\alpha_0 \alpha_1 \ldots \alpha_{k-1}$. Then observe that
$$h\left(w \overline{0}\right) \leq h(\alpha) \leq h\left(w \overline{1}\right)$$
which implies $h(\alpha) \in I_w \subset C_k$.

Now suppose $x \in \sC$. We study the number of preimages of $x$ under $h$. 
Observe that for each~$k \geq 0$ there exists a $w \in \sA^k$ such that $x \in I_w$.
We break into two cases. First suppose that for each~$k$ there exists a unique $w \in \sA^k$ such that $x \in I_w$.
Denote each such word by $w^k$.
Observe that $w'$ is an initial word of $w^k$ if and only if $I_{w'} \supset I_{w^k}$. It follows that for $j<k$, $w^j$ is the initial subword of $w^k$ of length $j$.
Then we can unambiguously define $\alpha \in \sA^\N$ by $\alpha_i=w^k_i$ for some $k>i$. 
Now observe that $h(\alpha) \in I_{w^k}$ for each $k$. Since the length of $I_{w^k}$ tends to zero as~$k \to \infty$, we see that $h(\alpha)=x$. 
Finally, suppose $\beta \in \sA^\N$ is distinct from $\alpha$. Then there is a $k$ such that the initial word of length $k$ of $\beta$ differs from $w^k$. 
We see that $h(\beta) \in I_{\beta_0 \ldots \beta_{k-1}}$ but $x$ is not in this interval, so $h(\beta) \neq x$. Thus $h$ is one-to-one at $x$.

If we are not in the first case, then there is a smallest $k$ such that there are two words in $\sA^k$ for which $x$ lies in both the corresponding intervals.
From the argument about initial words in the previous paragraph, we see that because $k$ is smallest, the two words have the same initial words.
That is, the two words must have the form $w0$ and $w1$. Thus we have $x \in I_{w0} \cap I_{w1}$. By~\eqref{eq:inductive} we see that $I_{w0} \cap I_{w1} \neq \emptyset$ if and only if $s_k=\frac{1}{2}$.
And if this intersection is non-empty then the intersection just consists of the midpoint of $I_w$.
In this case, $x$ is the right endpoint of $I_{w0}$ and the left endpoint of $I_{w1}$. 
So by \autoref{prop:interval formula}
we see $x=h(w0\overline{1})=h(w1\overline{0})$. Furthermore, it can be deduced by an inductive application of \eqref{eq:inductive}
that for any $j>0$, we have $w' \in \sA^{k+j}$ and $x \in I_{w'}$ if and only if $w' \in \{w01^j,w10^j\}.$
Then if $\beta \in \sA^\N \smallsetminus \{w0\overline{1}, w1\overline{0}\}$, there is some initial word $w'$ of $\beta$
of length $k+j$ such that $w' \not \in \{w01^j,w10^j\}$ and we have $h(\beta) \in I_{w'}$ but $x$ is not in this interval, so $h(\beta)\neq x$. This shows that $h$ is two-to-one at $x$.
Furthermore, there are only finitely many $k>0$ such that $s_k = \frac{1}{2}$ because of \eqref{eq:conditions on s}, so this case only appears finitely often.
\end{proof}

Recall from the introduction that $\sN=\{w\overline{1}:w \in \sA^\ast\} \subset \sA^\N$. This is an important set for us, and we prove the following.

\begin{proposition}\label{prop:N}\leavevmode
\begin{enumerate}
\item The restriction of $h$ to $\sA^\N \smallsetminus \sN$ is injective.
\item The Cantor set $\sC$ is the closure of $h(\sA^\N \smallsetminus \sN)$.
\item The set $h(\sA^\N \smallsetminus \sN)$ is the set of all $x \in \sC$ such that 
$(x,x+\epsilon) \cap \sC \neq \emptyset$ for all $\epsilon>0$.
\end{enumerate}
\end{proposition}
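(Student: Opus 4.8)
The plan is to prove the three parts in order, relying throughout on \autoref{lem:cantor set image} (which locates precisely where $h$ collapses points) together with the fact that $h$ is weakly increasing for the lexicographic order on $\sA^\N$. Concretely, if $\alpha$ and $\beta$ first differ at index $j$, with $\alpha_j=0$ and $\beta_j=1$, then a telescoping estimate gives $h(\beta)-h(\alpha)\geq\ell_j(1-2s_j)\geq0$, with equality forcing $s_j=\tfrac12$ together with $\alpha=w0\overline1$ and $\beta=w1\overline0$ for $w=\alpha_0\cdots\alpha_{j-1}$. Part~(1) is then immediate: by \autoref{lem:cantor set image} the only coincidence $h(\alpha)=h(\beta)$ with $\alpha\neq\beta$ has $\{\alpha,\beta\}=\{w0\overline1,\,w1\overline0\}$ with $s_{|w|}=\tfrac12$, and since $w0\overline1\in\sN$ while $w1\overline0\notin\sN$, at most one member of each such collapsing pair survives in $\sA^\N\smallsetminus\sN$; hence $h$ is injective there.

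For part~(2), the inclusion $\overline{h(\sA^\N\smallsetminus\sN)}\subseteq\sC$ is immediate, as $h(\sA^\N)=\sC$ is compact. For density I would fix $x\in\sC$ and $\epsilon>0$, choose $k$ with $\ell_k<\epsilon$, and select $w\in\sA^k$ with $x\in I_w$; the left endpoint $h(w\overline0)$ of $I_w$ then lies within $\ell_k<\epsilon$ of $x$, and $w\overline0\notin\sN$ because it ends in zeros. Thus $h(\sA^\N\smallsetminus\sN)$ is dense in $\sC$, and part~(2) follows.

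Part~(3) carries the real content. Write $P$ for the set of $x\in\sC$ such that $(x,x+\epsilon)\cap\sC\neq\emptyset$ for every $\epsilon>0$, that is, the points of $\sC$ approached from the right within $\sC$. To see $h(\sA^\N\smallsetminus\sN)\subseteq P$, take $\alpha\notin\sN$, so $\alpha$ has infinitely many zeros; since $\limsup s_k<\tfrac12$, there is a $J$ with $s_j<\tfrac12$ for all $j\geq J$. For each index $j\geq J$ with $\alpha_j=0$, set $\beta^{(j)}=\alpha_0\cdots\alpha_{j-1}1\overline0$; the estimate above yields $0<h(\beta^{(j)})-h(\alpha)\leq\ell_j$, and $h(\beta^{(j)})\in\sC$. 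Letting $j$ range over the infinitely many suitable indices, along which $\ell_j\to0$, produces points of $\sC$ strictly to the right of $h(\alpha)$ and arbitrarily close to it, so $h(\alpha)\in P$.

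The reverse inclusion $P\subseteq h(\sA^\N\smallsetminus\sN)$ I would establish contrapositively, by showing that any $\alpha\in\sN$ at which $h$ is injective satisfies $h(\alpha)\notin P$. Writing $\alpha=u0\overline1$ where $j=|u|$ is the position of the last zero (or $\alpha=\overline1$, which maps to the right-isolated maximum $b_0$ of $\sC$), \autoref{prop:interval formula} identifies $h(\alpha)$ as the right endpoint of $I_{u0}$; injectivity at this point forces $s_j<\tfrac12$ by \autoref{lem:cantor set image}, so the nonempty gap $\big(h(u0\overline1),h(u1\overline0)\big)$ separates $\sC\cap I_{u0}$ from all points of $\sC$ to its right, whence $h(\alpha)$ is right-isolated. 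Consequently a point of $P$ cannot have all preimages in $\sN$: if it is a one-to-one point its unique preimage lies outside $\sN$, while if it is a two-to-one point $h(w0\overline1)=h(w1\overline0)$, the preimage $w1\overline0\notin\sN$ already exhibits it as a value of $h$ on $\sA^\N\smallsetminus\sN$. I expect the main obstacle to be exactly this bookkeeping in part~(3): one must consistently track that in each collapsing pair the $\sN$-member is the lexicographically smaller $w0\overline1$, and must separate cleanly the one-to-one case (where right-isolation is read off from the genuine gap) from the two-to-one case (where a good preimage is automatic).
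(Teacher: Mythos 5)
Your proposal is correct and follows essentially the same route as the paper's proof: part~(1) via \autoref{lem:cantor set image} plus the observation that $w0\overline{1}\in\sN$ while $w1\overline{0}\notin\sN$, part~(2) by approximating with sequences of the form $w\overline{0}$, and part~(3) by flipping zeros at large indices for one inclusion and exhibiting the gap to the right of $h(u0\overline{1})$ (using that injectivity there forces $s_{|u|}<\frac12$) for the other. The only cosmetic differences are your lexicographic-monotonicity estimate and your choice of $\beta^{(j)}$ with tail $\overline{0}$ (which is why you need the threshold $J$ with $s_j<\frac12$, whereas the paper keeps the tail of $\alpha$ and gets strict inequality for free), plus the contrapositive packaging of the reverse inclusion; none of these change the substance.
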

\begin{proof}
Statement (1) is a consequence of \autoref{lem:cantor set image} since $h$ is one-to-one at all points except
that it is possible that $x=h(w0\overline{1})=h(w1\overline{0})$. But we have $w0\overline{1} \in \sN$.

Since $\sC=h(\sA^\N)$ and $\sC$ is closed by construction, to prove statement (2) we just need to find for each $\alpha \in \sN$ a sequence $\alpha^k \in \sA^\N \smallsetminus \sN$
such that $h(\alpha^k)$ converges to $h(\alpha)$. For each $k$, let $w^k=\alpha_0 \ldots \alpha_{k-1} \in \sA^k$
and define $\alpha^k=w^k \overline{0}$. Then both $h(\alpha)$ and $h(\alpha^k)$ lie in $I_{w^k}$ for each $k$ and the length of $I_{w^k}$ tends to zero so we see that $h(\alpha)=\lim h(\alpha^k)$ as desired.

Finally consider statement (3). 
First suppose that $\alpha \in \sA^\N \smallsetminus \sN$. Then there exists a sequence $k_j \to \infty$ such that $\alpha_{k_j}=0$.
For $j\geq 0$, define $\beta^j \in \sA$ so that the sequence agrees with $\alpha$ except that $\beta^j_{k_j}=1$. Observe that by definition of $h$, we have
$h(\beta^j)>h(\alpha)$ and $\lim h(\beta^j)=h(\alpha)$. This proves that $\big(h(\alpha),h(\alpha)+\epsilon\big)$ intersects $\sC=h(\sA^\N)$ for all
$\epsilon >0$.

On the other hand, suppose that $x \in \sC \smallsetminus h(\sA^\N \smallsetminus \sN)$. We need to show that there exists an $\epsilon>0$ such that $(x,x+\epsilon) \cap \sC = \emptyset$. If $x = h(\overline{1})$ then this is clearly true since $h(\overline{1})$ is the right endpoint of~$I_\varepsilon$ by \autoref{lem:key} and $\sC \subset I_\varepsilon$. 
Otherwise there exists a $w \in \sA^\ast$ such that $x=h(w0\overline{1})$. Furthermore, $h$ is one-to-one at $x$ since otherwise we would have $x=h(w1\overline{0})$ as well which would contradict that $x \not \in h(\sA^\N \smallsetminus \sN)$. 
Setting $k=|w|$ we see therefore that $s_k<\frac{1}{2}$ by \autoref{lem:cantor set image}. Since $x=h(w0\overline{1})$
we see that $x$ is the right endpoint of $I_{w0}$. Let $[a,b]=I_w$. Then we see in the notation of~\eqref{eq:inductive}
that $x=a+s_{k}(b-a)$ and the removed interval $I_w \smallsetminus (I_{w0} \cup I_{w_1})$ is $\big(x,x+(1-2 s_k)(b-a)\big)$ which gives an interval of positive length not intersecting $\sC$ as~required.
\end{proof}

\section{The conjugacy}
\label{sect:conjugacy}
Fix a positive integer $N$ and extend it to a sequence inductively by defining 
$$N_0=N \quad \text{and} \quad  N_{k+1}=N_k(1+N_k) \text{ for all $k \geq 0$.}$$ 
By an inductive application of \autoref{lem:return map} we see:

\begin{corollary}
\label{cor:return}
For each $k$, the first return map of $\,T_N$ to $X_{N_k}$ is $T_{N_k}$. 
\end{corollary}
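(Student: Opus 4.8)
The plan is to induct on $k$, using \autoref{lem:return map} as the single-step engine and the general ``transitivity'' (or tower) property of first-return maps to chain the steps together. Since $N_{k+1}=N_k(N_k+1)$, the relevant intervals are nested as $X_{N_{k+1}}=X_{N_k(N_k+1)} \subset X_{N_k} \subset \cdots \subset X_{N_0}=X_N$, so at each stage we are passing to a first-return map on a strictly smaller subinterval, and the computation we need has already been carried out in \autoref{lem:return map}.

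The key general fact I would record first is the following. If $Z \subset Y \subset X$ are nested intervals and $T:X \to X$ is an IET with finite return times to $Y$ and to $Z$, then the first-return map of $T$ to $Z$ equals the first-return map of $\hat T_Y$ to $Z$, where $\hat T_Y:Y \to Y$ denotes the first-return map of $T$ to $Y$. The reason is that any point $y \in Z \subset Y$ can only reach $Z$ by first landing back in $Y$; thus the successive visits of the forward $T$-orbit of $y$ to $Y$ are exactly the iterates $\hat T_Y^i(y)$, and the first of these that lies in $Z$ is simultaneously the first $T$-return of $y$ to $Z$. Making this precise is the only real content of the argument, and it is the step I expect to need the most care, although it is routine once one observes that finiteness of the relevant return times guarantees all the maps involved are everywhere defined.

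With this in hand the induction is immediate. For the base case $k=0$, the first-return map of $T_N$ to $X_{N_0}=X_N$ is $T_N=T_{N_0}$, with return time $1$. For the inductive step, assume the first-return map of $T_N$ to $X_{N_k}$ is $T_{N_k}$. Applying the transitivity property with $X=X_N$, $Y=X_{N_k}$, and $Z=X_{N_{k+1}}$, the first-return map of $T_N$ to $X_{N_{k+1}}$ equals the first-return map of $T_{N_k}$ to $X_{N_{k+1}}$. But $X_{N_{k+1}}=X_{N_k(N_k+1)}$, so \autoref{lem:return map} applied with $N_k$ in place of $N$ identifies this return map as $T_{N_k(N_k+1)}=T_{N_{k+1}}$, and simultaneously certifies that the return time is finite (equal to $2$), which keeps every first-return map in the tower well-defined. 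This completes the induction and hence the proof.
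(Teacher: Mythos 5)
Your proof is correct and matches the paper's approach: the paper justifies this corollary with the single line ``by an inductive application of \autoref{lem:return map},'' which is precisely the induction you carry out. Your explicit statement of the tower property of first-return maps (that the return map to $Z \subset Y$ equals the return map of $\hat T_Y$ to $Z$) just fills in the step the paper leaves implicit, and your verification of it is sound.
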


Set $[a_0,b_0]=[0,\frac{1}{N}]$ and define the sequence $s=\{s_k\}$ by $s_k=\frac{1}{1+N_k}$. With this data, we define the Cantor set $\sC=\sC(s,[a_0,b_0])$ and the map $h:\sA^\N \to \R$ as in \autoref{sect:cantor}. See \autoref{fig:cantor_set} for a sketch of $\sC$ when $N=1$. Observe that this choice of $[a_0, b_0]$ and of $s$ and application of~\eqref{eq:lengths} yields $\ell_0 = \frac{1}{N} - 0 = \frac{1}{N_0}$ and inductively we have
$$\ell_k= \ell_{k-1} \cdot s_{k-1} = \frac{1}{N_{k-1}} \cdot \frac{1}{1+N_{k-1}} = \frac{1}{N_k} \quad \text{for every $k$}.$$
We use this information to define the intervals $I_w$ as before. 

\begin{figure}
\includegraphics[width=4.9in]{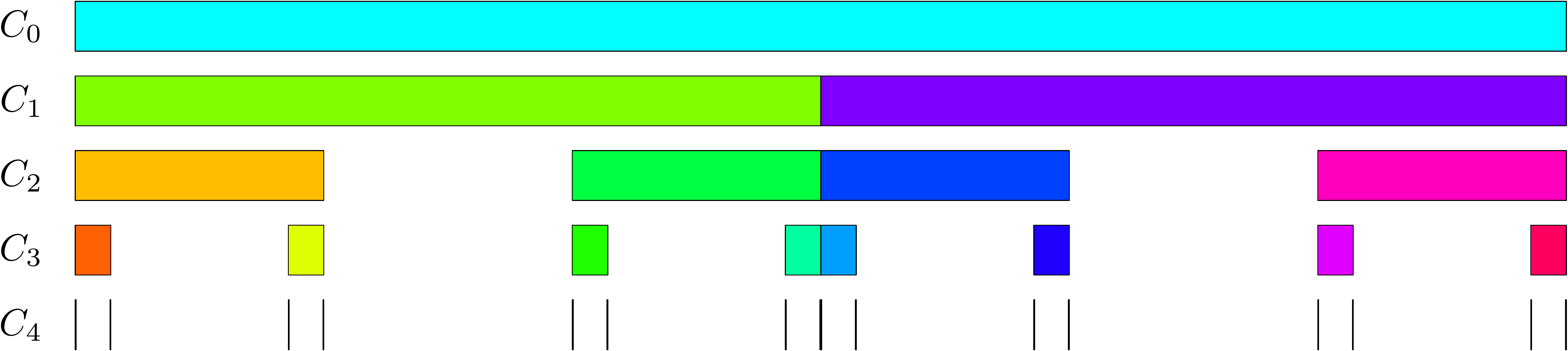}
\caption{The construction of the Cantor set $\sC$ when $N=1$.}
\label{fig:cantor_set}
\end{figure}

Recall the definition of the $2$--adic odometer $f : \sA^\N \to \sA^\N$ in \eqref{eq:f}. We want to extend this addition-by-one map to $\sA^\ast = \cup_{k\geq 0} \sA^k$.
At words of the form $1^k$ for some $k \geq 0$, we leave the map $f$ undefined. 
We define $f:\sA^k \smallsetminus \{1^k\} \to \sA^k$ such that
\begin{equation}
\label{eq:plus one words}
\big(f(w)\big)_i = \begin{cases}
0 & \text{if $i<j$}\\
1 & \text{if $i=j$}\\
w_i & \text{if $i>j$}
\end{cases}
\quad \text{where} \quad
j=\min~\{i:~w_i=0\}.
\end{equation}

For this section, if $I$ is a closed interval, we write $I^\star$ to denote $I$ with its right endpoint removed.

The key to the results announced in the introduction is the following: 

\begin{lemma}
\label{lem:key}
For any $w \in \sA^k \smallsetminus \{1^k\}$, the restriction of $\,T_N$ to $I_w^\star$ is a translation carrying $I_w^\star$ to~$I_{f(w)}^\star$.
If $w=1^j 0$ for some $j \geq 0$ then this is a translation by $- \frac{1}{N_0} + \frac{1}{N_j} + \frac{1}{1+N_j}$.
\end{lemma}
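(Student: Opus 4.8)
The plan is to reduce everything to a single continuity interval of $T_N$ together with the self-similarity of the Cantor construction. Since $w \neq 1^k$, it has a well-defined number $j \geq 0$ of leading ones, so I can write $w = 1^j 0 u$ where $u = w_{j+1}\cdots w_{k-1}$ is a (possibly empty) tail. Directly from \eqref{eq:plus one words} the odometer only rewrites the prefix, giving $f(w) = 0^j 1 u$. Thus the whole claim amounts to showing that $T_N$ acts on $I_w^\star$ as the single translation by $c_j = -\frac{1}{N_0} + \frac{1}{N_j} + \frac{1}{1+N_j}$ and that this translation sends $I_w$ onto $I_{f(w)}$; note that the translation constant asserted in the statement is exactly $c_j$, and the plan will show it depends only on $j$, not on the tail $u$.

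First I would pin down $I_{1^j 0}$. Unwinding \eqref{eq:inductive} with $\ell_k = \frac{1}{N_k}$ and $s_k = \frac{1}{1+N_k}$, the interval $I_{1^j}$ is $[\frac{1}{N_0} - \frac{1}{N_j}, \frac{1}{N_0}]$ (its right endpoint is $h(\overline 1) = \frac{1}{N_0}$ at every level), and taking its left child gives $I_{1^j 0} = [\frac{1}{N_0} - \frac{1}{N_j},\, \frac{1}{N_0} - \frac{1}{N_j} + \frac{1}{N_{j+1}}]$. The crucial computation is the identity $\frac{1}{N_j} - \frac{1}{N_{j+1}} = \frac{1}{1+N_j}$, which is just the defining relation $N_{j+1} = N_j(1+N_j)$ rewritten. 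It shows that the right endpoint of $I_{1^j 0}$ equals $\frac{1}{N_0} - \frac{1}{N_j + 1}$, so that $I_{1^j 0}^\star = [\frac{1}{N_0} - \frac{1}{N_j},\, \frac{1}{N_0} - \frac{1}{N_j + 1})$ is \emph{precisely} the defining subinterval of $T_N$ indexed by $m = N_j$ (legitimate since $N_j \geq N_0$). On that interval the quantity $k$ in the formula for $T_N$ equals $N_j$, so $T_N$ is the single translation $x \mapsto x - \frac{1}{N_0} + \frac{1}{N_j} + \frac{1}{N_j + 1} = x + c_j$. Because $I_w = I_{1^j 0 u} \subseteq I_{1^j 0}$ and the $\star$-operation removes the shared right endpoint, one gets $I_w^\star \subseteq I_{1^j 0}^\star$, whence the restriction of $T_N$ to $I_w^\star$ is the single translation by $c_j$.

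It remains to identify the image $I_w^\star + c_j$ with $I_{f(w)}^\star$; since translation commutes with removing the right endpoint, it suffices to show $I_w + c_j = I_{f(w)}$, that is $I_{1^j 0 u} + c_j = I_{0^j 1 u}$. By \autoref{prop:interval formula} both intervals are determined by their left endpoints $h(1^j 0 u\overline 0)$ and $h(0^j 1 u \overline 0)$ together with their common length $\ell_{j+1}$, so I only need these two left endpoints to differ by $c_j$. Computing them from \eqref{eq:h}, the leading blocks contribute the telescoping sums $\ell_0 - \ell_j$ and $\ell_j - \ell_{j+1}$ respectively, while the tail $u$ contributes identical terms to both; the difference is therefore $\ell_0 - 2\ell_j + \ell_{j+1}$, independent of $u$, and using $N_{j+1} = N_j(1+N_j)$ once more this equals $-c_j$. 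This is the concrete manifestation of self-similarity: the translation carrying the base interval $I_{1^j 0}$ onto $I_{0^j 1}$ drags the entire tail-indexed sub-structure along with it.

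The only genuine obstacle is bookkeeping: handling the decomposition $w = 1^j 0 u$ uniformly for empty and nonempty tail, and invoking the single identity $\frac{1}{N_j} - \frac{1}{N_{j+1}} = \frac{1}{1+N_j}$ consistently. Once that identity is in hand, both the step ``$I_{1^j 0}^\star$ is exactly a continuity interval of $T_N$'' and the endpoint computation are routine, and the explicit translation constant for $w = 1^j 0$ (the empty-tail case) falls out of the same computation.
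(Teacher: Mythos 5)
Your proof is correct and follows essentially the same route as the paper's: reduce to the prefix $1^j0$, observe that $I_{1^j0}^\star$ is exactly the continuity interval of $T_N$ indexed by $k=N_j$ (so $T_N$ translates it, and any subset such as $I_w^\star$, by $c_j = -\frac{1}{N_0}+\frac{1}{N_j}+\frac{1}{1+N_j}$), and then match $I_{f(w)}$ by comparing left endpoints via the telescoping formula for $h$, with the tail $u$ contributing identically to both. The only blemish is the phrase ``common length $\ell_{j+1}$,'' which should read $\ell_{|w|}$; this does not affect the argument since lengths depend only on word length and $|w|=|f(w)|$.
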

\begin{proof}
First we prove this for the special case when $w=1^j 0$. By \autoref{prop:interval formula}, the endpoints of~$I_w$ are 
$$h\left(w \overline{0}\right)=\sum_{i=0}^{j-1} \left(\frac{1}{N_i}-\frac{1}{N_{i+1}}\right)=\frac{1}{N_0}-\frac{1}{N_j},$$
$$h\left(w \overline{1}\right)=\sum_{i=0}^{j-1} \left(\frac{1}{N_i}-\frac{1}{N_{i+1}}\right)+\sum_{i=j+1}^\infty
 \left(\frac{1}{N_i}-\frac{1}{N_{i+1}}\right)=\frac{1}{N_0}-\frac{1}{N_j}+\frac{1}{N_{j+1}}=\frac{1}{N_0}-\frac{1}{1+N_j}.$$
Thus if $x \in I_w^\star$ then we see by definition of $T_N$ that 
\begin{equation}
\label{eq:image}
T_N (x)=x - \frac{1}{N_0} + \frac{1}{N_j} + \frac{1}{1+N_j}.
\end{equation}
The word $f(w)$ is a string of $j$ zeros followed by a one. Thus, we see that the endpoints of $I_{f(w)}$~are
$$h\left(f(w) \overline{0}\right)=\frac{1}{N_j}-\frac{1}{N_{j+1}}=\frac{1}{1+N_j},$$
$$h\left(f(w) \overline{1}\right)=\sum_{i=j}^\infty
 \left(\frac{1}{N_i}-\frac{1}{N_{i+1}}\right)=\frac{1}{N_j}.$$
Observe that these new endpoints differ from the endpoints of $I_w$ found earlier by a translation by
$- \frac{1}{N_0} + \frac{1}{N_j} + \frac{1}{1+N_j}$ which is exactly how $T_N$ acts.
This proves the second statement of the~lemma.

Now suppose that $w' \in \sA^\ast$ is a word which has at least one zero.
As in \eqref{eq:plus one words}, we can then define $j=\min~\{i:~w'_i=0\}$. Hence $w=w'_0 \ldots w'_j$ is a word consisting of $j$ ones followed by a zero, so the previous paragraph implies that $T_N$ restricted to 
$I_w^\star$ is a translation by $- \frac{1}{N_0} + \frac{1}{N_j} + \frac{1}{1+N_j}$. 
Recall that $I_{w'}^\star \subset I_w^\star$ which implies that the restriction of $T_N$ to $I_{w'}^\star$  also acts by the same translation. The intervals $I_{w'}$ and $I_{f(w')}$ have the same length and their left endpoints differ by 
$$h\left(f(w') \overline{0}\right)-h\left(w' \overline{0}\right)=
h\left(f(w) \overline{0}\right)-h\left(w \overline{0}\right)=
- \frac{1}{N_0} + \frac{1}{N_j} + \frac{1}{1+N_j}$$
so that indeed $T_N(I_{w'}^\star)=I_{f(w')}^\star$.
\end{proof}

\begin{theorem}\label{thm:periodic 2}
If $x \in X_N \smallsetminus h(\sA^\N \smallsetminus \sN)$, then $x$ is periodic under $T_N$.
\end{theorem}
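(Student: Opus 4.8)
The plan is to reduce the statement to a claim about the individual \emph{gaps} of the Cantor set $\sC$ and then to exploit the self-similarity provided by \autoref{cor:return}. For $v \in \sA^\ast$ I would write $J_v = [h(v0\overline{1}), h(v1\overline{0}))$, the half-open interval consisting of the open gap $I_v \smallsetminus (I_{v0}\cup I_{v1})$ removed at stage $|v|+1$ together with its left endpoint. Using \autoref{prop:N}(3) and the analysis inside its proof, I would first check
$$X_N \smallsetminus h(\sA^\N \smallsetminus \sN) \subseteq \bigcup_{v \in \sA^\ast} J_v.$$
Indeed, a point of $X_N$ not lying in $\sC$ lies in one of the removed open gaps, hence in some $J_v$; and a point $x \in \sC \cap X_N$ with $x \notin h(\sA^\N \smallsetminus \sN)$ is not $h(\overline{1}) = \frac1N$, so by the argument in the proof of \autoref{prop:N}(3) it has the form $x = h(v0\overline{1})$, the left endpoint of $J_v$. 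Thus it suffices to show that every point of every $J_v$ is periodic.

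Next I would single out the one gap per level that is manifestly periodic. Setting $k = |v|$ and using \eqref{eq:lengths}, a short computation gives $J_{0^k} = [\frac{1}{N_{k+1}}, \frac{1}{1+N_k}) = [\frac{1}{N_k(N_k+1)}, \frac{1}{N_k+1})$. This is exactly the interval to which \autoref{cor:periodic interval} applies for the map $T_{N_k}$, so every point of $J_{0^k}$ is periodic under $T_{N_k}$. Since $J_{0^k} \subseteq X_{N_k}$ and, by \autoref{cor:return}, $T_{N_k}$ is the first return map of $T_N$ to $X_{N_k}$, every point of $J_{0^k}$ is periodic under $T_N$ as well.

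To treat an arbitrary $v \in \sA^k$ I would transport it to $J_{0^k}$ along the odometer. Since $J_v \subseteq I_v^\star$ (its right endpoint $h(v1\overline{0})$ lies strictly left of the right endpoint of $I_v$), \autoref{lem:key} shows that for $v \neq 1^k$ the translation $T_N$ carries $I_v^\star$ orientation-preservingly onto $I_{f(v)}^\star$, and because the gap sits at the same relative position inside every level-$k$ interval this forces $T_N(J_v) = J_{f(v)}$. Iterating $f$ from $v$ reaches $1^k$ after finitely many steps, every intermediate word differing from $1^k$ so that \autoref{lem:key} applies at each step; hence some forward image of $J_v$ equals $J_{1^k}$. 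The remaining, genuinely different, step is to apply $T_N$ to $J_{1^k}$, where $w = 1^k$ is precisely the word excluded from \autoref{lem:key}, so I must return to the defining formula for $T_N$. A direct computation with $k' = \floor{\frac{1}{1/N - x}}$ shows that on $J_{1^k} = [\frac1N - \frac{1}{1+N_k}, \frac1N - \frac{1}{N_{k+1}})$ the parameter $k'$ ranges exactly over $\{1+N_k, \ldots, N_{k+1}-1\}$, and that $T_N$ maps this interval into (indeed onto) $[\frac{1}{N_{k+1}}, \frac{1}{1+N_k}) = J_{0^k}$. Therefore the forward orbit of every point of $J_v$ enters the periodic set $J_{0^k}$.

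Finally, since $T_N$ is injective, periodicity pulls back along the orbit: if $T_N^{m}(y) = z$ with $T_N^p(z) = z$, then $T_N^m\big(T_N^p(y)\big) = T_N^p(z) = z = T_N^m(y)$, and injectivity of $T_N^m$ gives $T_N^p(y) = y$. Applying this with $z \in J_{0^k}$ shows every $y \in J_v$ is periodic, which completes the proof. I expect the main obstacle to be the single step $T_N(J_{1^k})$: this is the one place where the clean conjugacy of \autoref{lem:key} breaks down and one must compute directly with the floor function, and the decisive observation making everything fit is that this ``overflow'' image is exactly the renormalized periodic interval $J_{0^k}$ governed by \autoref{cor:periodic interval}.
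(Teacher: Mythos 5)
Your proof is correct, and it is assembled from the same ingredients as the paper's: the reduction of $X_N \smallsetminus h(\sA^\N \smallsetminus \sN)$ to the half-open gaps $J_v$ (via \autoref{prop:N}), the transport of gaps along the odometer via \autoref{lem:key}, and the identification of $J_{0^k}=\left[\frac{1}{N_{k+1}},\frac{1}{1+N_k}\right)$ as the interval governed by \autoref{cor:return} together with \autoref{cor:periodic interval}. The genuine difference is the direction of transport, and it is not cosmetic. The paper goes \emph{backward}: given $x$ in the gap of $I_w$ with $|w|=k$, it takes $m\geq 0$ with $f^m(0^k)=w$, uses \autoref{lem:key} to produce a point $x_0$ in the gap of $I_{0^k}$ with $T_N^m(x_0)=x$, and is then done in one line, since a forward image of a periodic point is periodic; every word $f^i(0^k)$, $i<m$, is automatically different from $1^k$, so \autoref{lem:key} always applies. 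You go \emph{forward} from $J_v$ toward $J_{1^k}$, which forces you to handle the one transition that \autoref{lem:key} explicitly excludes --- the wrap-around $T_N(J_{1^k})=J_{0^k}$ --- by a direct computation with the defining formula of $T_N$ (which you do correctly: for $x\in J_{1^k}$ the index $k'$ runs over $1+N_k,\dots,N_{k+1}-1$, and the image intervals $\left[\frac{1}{k'+1},\frac{1}{k'}\right)$ tile $J_{0^k}$), and it also forces the final step of pulling periodicity back along the orbit using injectivity of $T_N$, a step the paper's orientation makes unnecessary. Both extra steps are sound, so your argument is a valid, slightly longer variant; what it buys in exchange is a sharper structural statement that the paper never makes explicit: $T_N$ cyclically permutes the $2^k$ half-open gaps of level $k$, with the wrap-around being precisely the one step where the clean translation structure of \autoref{lem:key} breaks down.
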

\begin{proof}
Let $x \in X_N \smallsetminus h(\sA^\N \smallsetminus \sN)$.
Then either $x$ is not contained in the closed set $\sC$ or we can apply statement (3) of \autoref{prop:N}. In both cases, there is an $\epsilon$ such that $(x,x+\epsilon) \cap \sC=\emptyset$.
Since $X_N=I_\varepsilon^\star$, the interval $(x,x+\epsilon)$ must lie in one of the gaps of the Cantor set, i.e., there is a $w \in \sA^k$ such that 
$$(x,x+\epsilon) \subset I_w \smallsetminus (I_{w0} \cup I_{w1}).$$
It follows that $x \in I_w^\star \smallsetminus (I_{w0}^\star \cup I_{w1}^\star)$.
Then $I_{0^k}^\star$ has the same length as $I_w^\star$ and so we have $I_w^\star=\tau+I_{0^k}^\star$ for some $\tau \in \R$ acting by translation. Set $x_0=x-\tau \in I_{0^k}^\star$. Observe that there exists an $m \geq 0$ such that $f^m(0^k)=w$, where $f$ is as in \eqref{eq:plus one words}. By \autoref{lem:key},
we know that $T_N^m$ restricted to $I_{0^k}^\star$ is a translation carrying this interval $I_{0^k}^\star$ to $I_{w}^\star$. Thus $T_N^m(x_0)=x$. It also follows
that $T_N^m(I_{0^{k+1}}^\star)=I_{w0}^\star$ and $T_N^m(I_{0^k1}^\star)=I_{w1}^\star$ and in particular $x_0 \not \in I_{0^{k+1}}^\star \cup I_{0^k 1}^\star$.

Now observe that $I_{0^k}^\star=X_{N_k}=[0, 1/N_k)$ by \autoref{prop:interval formula}, and by \autoref{cor:return}
the first return map of $T_N$ to this interval is $T_{N_k}$. Since $I_{0^{k+1}}^\star=[0,1/N_{k+1})$ and $I_{0^k 1}^\star=[1/N_k-1/N_{k+1},1/N_k)$, \autoref{cor:periodic interval} tells us that $x_0$ is periodic under $T_{N_k}$ and therefore also periodic under $T_N$. Since $x=T_N^m(x_0)$,
$x$ is also periodic.
\end{proof}

\begin{theorem}
\label{thm:aperiodic 2}
For any $\alpha \in \sA^\N \smallsetminus \sN$, we have $T_N \circ h(\alpha)=h \circ f(\alpha)$. In particular, no point in $h(\sA^\N \smallsetminus \sN)$ has a periodic orbit.
\end{theorem}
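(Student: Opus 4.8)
The plan is to derive the conjugacy $T_N \circ h(\alpha) = h \circ f(\alpha)$ from \autoref{lem:key} by a prefix-approximation argument, trapping both sides in a common interval of the Cantor construction whose length shrinks to zero, and then to bootstrap the ``in particular'' clause from the fact that the odometer has no periodic points.

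First I would fix $\alpha \in \sA^\N \smallsetminus \sN$. Since $\alpha$ does not end in an infinite string of ones, it has infinitely many zeros, so $j = \min\{i : \alpha_i = 0\}$ is finite. For every integer $k > j$ I set $w = \alpha_0 \ldots \alpha_{k-1}$, the length-$k$ prefix of $\alpha$. The key observation is a compatibility between the two odometers: because this prefix already contains the zero at position $j$, it is not the all-ones word $1^k$, and comparing \eqref{eq:f} with \eqref{eq:plus one words} shows that $f(w)$ is precisely the length-$k$ prefix of $f(\alpha)$ (the carry in \eqref{eq:f} terminates at position $j<k$, so flipping affects only entries inside the prefix). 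I expect verifying this compatibility, together with the requirement $w \neq 1^k$ needed to invoke \autoref{lem:key}, to be the step demanding the most care, since it is what lets the finite odometer on words speak to the infinite odometer on $\sA^\N$.

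With this in hand, I would locate both points in the interval $I_{f(w)}$. From \eqref{eq:h} one has $h(w\overline{0}) \leq h(\alpha) \leq h(w\overline{1})$, and the right inequality is strict because $\alpha \neq w\overline{1}$ (as $\alpha \notin \sN$); hence $h(\alpha) \in I_w^\star$. As $w \neq 1^k$, \autoref{lem:key} then gives $T_N(h(\alpha)) \in I_{f(w)}^\star \subset I_{f(w)}$. On the other side, the containment argument in the first part of \autoref{lem:cantor set image} shows $h(f(\alpha)) \in I_{f(w)}$ as well, since $f(w)$ is the length-$k$ prefix of $f(\alpha)$. Thus $T_N(h(\alpha))$ and $h(f(\alpha))$ lie in the common interval $I_{f(w)}$ for every $k > j$; because the length of $I_{f(w)}$ equals $\ell_k \to 0$, the two points must agree, giving $T_N \circ h(\alpha) = h \circ f(\alpha)$.

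For the final clause I would first note that $f$ maps $\sA^\N \smallsetminus \sN$ into itself, since it flips only the finitely many entries in positions $0, \ldots, j$ and thereby preserves the property of having infinitely many zeros. Consequently the conjugacy can be iterated: $T_N^n \circ h(\alpha) = h \circ f^n(\alpha)$ for all $n \geq 0$. If $h(\alpha)$ had period $p > 0$, then $h(f^p(\alpha)) = h(\alpha)$, and injectivity of $h$ on $\sA^\N \smallsetminus \sN$ from \autoref{prop:N}(1) would force $f^p(\alpha) = \alpha$; but $f$ has no periodic points (it is minimal on the infinite space $\sA^\N$, or directly $f^p(\alpha) = \alpha + p \neq \alpha$ in the $2$--adic integers). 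This contradiction shows that no point of $h(\sA^\N \smallsetminus \sN)$ is periodic.
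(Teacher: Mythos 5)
Your proof is correct, but its central mechanism is genuinely different from the paper's. The paper works only with the single word $1^j0$ determined by the first zero of $\alpha$: it computes the difference $h\circ f(\alpha)-h(\alpha)$ exactly from the series \eqref{eq:h} (the two sequences differ only in positions $0,\dots,j$, so the difference telescopes to $-\frac{1}{N_0}+\frac{1}{N_j}+\frac{1}{1+N_j}$), and then matches this against the quantitative second assertion of \autoref{lem:key}, which says that $T_N$ translates $I_{1^j0}^\star$ by exactly that constant; the identity is obtained in one step with no limiting argument. You instead use only the qualitative first assertion of \autoref{lem:key} (that $T_N$ carries $I_w^\star$ onto $I_{f(w)}^\star$ by \emph{some} translation), which forces you to verify the compatibility of the word odometer \eqref{eq:plus one words} with the sequence odometer \eqref{eq:f} on every prefix of length $k>j$, and to conclude by trapping $T_N(h(\alpha))$ and $h(f(\alpha))$ in the common interval $I_{f(w)}$ of length $\ell_k\to 0$. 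Your route buys robustness: it never needs the explicit translation constants, so it would apply verbatim in any construction where the interval-to-interval statement of \autoref{lem:key} holds. The paper's route buys brevity and exactness, exhibiting the precise rational translation by which both maps move the point. One detail you should spell out: the strict inequality $h(\alpha)<h(w\overline{1})$ does not follow from ``$\alpha\neq w\overline{1}$'' alone, since $h$ need not be injective; rather, for sequences sharing the prefix $w$ one has $h(w\overline{1})-h(\alpha)=\sum_{i\geq k}(1-\alpha_i)(\ell_i-\ell_{i+1})$, which is positive exactly because $\alpha\notin\sN$ guarantees a zero digit beyond position $k$ (the paper elides the analogous point when asserting $h(\alpha)\in I_{1^j0}^\star$). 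Your treatment of the ``in particular'' clause coincides with the paper's (injectivity from \autoref{prop:N}(1) plus aperiodicity of $f$), and your explicit observation that $f$ preserves $\sA^\N\smallsetminus\sN$---which is needed to iterate the conjugacy---is a detail the paper leaves implicit.
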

\begin{proof}
Fix $\alpha\in \sA^\N \smallsetminus \sN$. Define $j=\min~(\{k:~\alpha_k=0\} \cup \{+\infty\})$ as in \eqref{eq:f}. Since $\alpha \not \in \sN$ we have $j < +\infty$. The initial word of $\alpha$ then has the form $1^j 0$ and the initial word of $f(\alpha)$ is $0^j 1$. The rest of the sequence $f(\alpha)$ agrees with $\alpha$.
Therefore we have 
\begin{equation}
\label{eq:compare}
h \circ f(\alpha)-h(\alpha)= \left(\frac{1}{N_j}-\frac{1}{N_{j+1}}\right)-\sum_{i=0}^{j-1} \left(\frac{1}{N_i} - \frac{1}{N_{i+1}}\right)=
-\frac{1}{N_0} + \frac{1}{N_j}+\frac{1}{1+N_j}.
\end{equation}
Let $x = h(\alpha)$. Then $x \in I_{1^j 0}^\star$ and $T_N$ acts as a translation by $-\frac{1}{N_0} + \frac{1}{N_j}+\frac{1}{1+N_j}$ on $I_{1^j 0}^\star$;
see \autoref{lem:key}. Thus by equation~\eqref{eq:compare} we see that $h \circ f(\alpha)=T_N\circ h(\alpha)$.
Since $f$ has no periodic orbits and $h$ restricted to $\sA^\N\smallsetminus \sN$ is injective, we see that $T_N$ has no periodic orbits in $h(\sA^\N\smallsetminus \sN)$.
\end{proof}

We finish by proving the first two theorems of our article. 

\begin{proof}[Proof of Theorems \ref{thm: zero dimensional} and \ref{thm:aperiodic}]
Recall that $\Lambda_N$ denoted the set of points in $X_N$ with aperiodic orbits under $T_N$. Together \autoref{thm:periodic 2} and \autoref{thm:aperiodic 2} guarantee that $\Lambda_N=h(\sA^\N \smallsetminus \sN)$. 
\autoref{thm:aperiodic 2} then directly implies \autoref{thm:aperiodic}.
Statement (2) in \autoref{prop:N} shows that the closure $\bar \Lambda_N$ is the Cantor set $\sC$.
Further by statement (3) of \autoref{prop:N} we see that $\Lambda_N$ has the form claimed in \autoref{thm: zero dimensional}. The fact that $\Lambda_N$ has Hausdorff dimension zero follows from \autoref{prop:Hausdorff dimension}.
\end{proof}

\section*{Acknowledgements}
The authors acknowledge support from U.S. National Science Foundation grants DMS 1107452, 1107263, 1107367 ``RNMS: GEometric structures And Representation varieties'' (the GEAR Network).
Contributions of the first author are based upon work supported by the National Science Foundation under Grant Number DMS-1500965 as well as a PSC-CUNY Award (funded by The Professional Staff Congress and The City University of New
York).
The second author acknowledges support from NSERC Discovery grant RGPIN 06486.
The work of the third author is partially supported by NSERC grant RGPIN 06521.

\bibliographystyle{amsalpha}
\bibliography{bibliography}
\end{document}